\newtheorem{thm}{}[section]
\newtheorem{theorem}[thm]{Theorem}
\newtheorem{cor}[thm]{Corollary}
\newtheorem{lem}[thm]{Lemma}
\newtheorem{prop}[thm]{Proposition}
\newtheorem{defn}[thm]{Definition}
\newtheorem{remark}[thm]{Remark}
\theoremstyle{remark}
\numberwithin{equation}{section}
\newcommand{\Env}[2][]{%
\ifthenelse{ \equal{#1}{} }  
{\ensuremath{#2_{\mathsf{c}}}}  
{\ensuremath{#2_{\mathsf{c},#1}}}
}
\newcommand{\cN}{\mathcal{NN}}
\newcommand{\N}{\mathbb{N}}
\newcommand{\R}{\mathbb{R}}
\newcommand{\X}{\mathbb{X}}
\newcommand{\Y}{\mathbb{Y}}
\newcommand{\D}{\mathcal{D}}
\newcommand{\Realization}{\mathrm{R}}
\newcommand{\norm}[1][\cdot]{ \Vert #1 \Vert }
\def\ba{\begin{eqnarray*}}
	\def\ea{\end{eqnarray*}}
\def\bee{\begin{equation}}
	\def\ene{\end{equation}}
\newcommand{\Rbb}{\mathbb{R}}
\newcommand{\Sc}{\mathcal{D}}
\begin{document}
\setcounter{page}{1}

\title[Universality for Non-linear Convex Variational Problems]{Universality for Non-linear Convex Variational Problems}

\author[P.M. Bern\'a]{Pablo M. Bern\'a}
\address{Pablo M. Bern\'a\\
	Departamento de Métodos Cuantitativos, CUNEF Universidad\\ 
	Madrid\\
	28040 Spain} 
\email{pablo.berna@cunef.edu}

\author[A. Falc\'o]{Antonio Falc\'o}
\address{Antonio Falc\'o \\
ESI International Chair@CEU‐UCH \\
Universidad Cardenal Herrera‐CEU, CEU Universities \\ 
San Bartolomé 55, 46115 \\ 
Alfara del Patriarca, Valencia, Spain} 
	\email{afalco@uchceu.es}

\subjclass[2020]{68Q32,41A65,46B15}
\subjclass{65K10,68Q32} 

\keywords{Non-linear Convex Functional, Variational Problem, Greedy Algorithm, Radial Dictionary.} 


\begin{abstract}

	
	This article introduces an innovative mathematical framework designed to tackle non-linear convex variational problems in reflexive Banach spaces. Our approach employs a versatile technique that can handle a broad range of variational problems, including standard ones. To carry out the process effectively, we utilize specialized sets known as radial dictionaries, where these dictionaries encompass diverse data types, such as tensors in Tucker format with bounded rank and Neural Networks with fixed architecture and bounded parameters. The core of our method lies in employing a greedy algorithm through dictionary optimization defined by a multivalued map. Significantly, our analysis shows that the convergence rate achieved by our approach is comparable to the Method of Steepest Descend implemented in a reflexive Banach space, where the convergence rate follows the order of $O(m^{-1})$.
\end{abstract}

\maketitle

\section{Introduction}\label{sec0}

Related to the numerical methods of Partial Differential Equations (PDEs), two particular 
frameworks are attracted in recent years to the scientific computing community: the application
of Tensor Numerical Methods \cites{BSU,FN,FHN,BK} and 
the Deep Neural Networks (DNNs) \cites{Petersen2,Weinan1,Raissi1,Raissi2,Sheng}.
Concerning the use of these methods in engineering and industrial applications,
among the family of tensor-based methods, the Proper Generalized Decomposition (PGD) 
has been used , among others, in surgery simulations \cite{Niroomandi}, design optimization \cites{Ammar,Leygue}, data-driven applications \cite{Gonzalez}, elastodynamic \cite{Boucinha}, as well in structural damage identification \cite{Coelho}.

Here, we propose a generalization of the use of the Proper Generealized Decomposition studied in \cite{FN}. Concretely, we would to propose a mathematical framework that includes, among others, 
the use of tensors formats and DNNs to solve the problem 
\begin{eqnarray}\label{in1}
	\min_{x\in\mathbb X} \mathcal E(x),
\end{eqnarray}
where $\mathcal E:\X \longrightarrow \R$ is an elliptic and differentiable functional defined over a reflexive Banach space $\X.$ 

A model problem of the above framework is the following one (see for example \cite{Dinca} for more details): let $\Omega$ be a bounded domain in $\mathbb{R}^{d}$ $(d \ge 2)$ with 
Lipschitz continuous boundary. For some fixed  $p > 2$, we let
$H^{1,p}_0(\Omega)$, which
is the closure of $C_c^\infty(\Omega)$ (functions in
$C^\infty(\Omega)$ with compact support in $\Omega$) with respect 
the norm
$$
\norm[f] = \left(\sum_{\ell=1}^d
\norm[\partial_{x_\ell}f]_{L^p(\Omega)}^p \right)^{1/p}.
$$
We then introduce the
functional $\mathcal E: H^{1,p}_0(\Omega): \rightarrow \Rbb$ defined by
$$
\mathcal E(u) = \frac{1}{p}\norm[u]^p
- \langle \varphi,u \rangle,
$$
with $\varphi \in H^{1,p}_0(\Omega)^\ast$. Its Fr\'echet differential is
$\mathcal E'(u) = -\Delta_p -\varphi$
where
$$
\Delta_p(u) = \sum_{\ell=1}^d  \frac{\partial}{\partial
	x_{\ell}}\left(\left\vert \frac{\partial u}{\partial
	x_{\ell}}\right\vert^{p-2}  \frac{\partial u}{\partial
	x_{\ell}}\right),
$$
and $\Delta_p$ is known as the $p$-Laplacian.

In this paper, to obtain the solution $u^*$ of \eqref{in1}, we will use the following ``greedy algorithm":
\begin{enumerate}
	\item $u_0=0$;
	\item for $m\geq 1$, $u_m\in u_{m-1}+	\boldsymbol{\nabla}_{u_{m-1}}(\mathcal E;\Sc)$,
\end{enumerate}
where $$	\boldsymbol{\nabla}_u(\mathcal E;\Sc)=\arg\min_{z\in u+\mathcal D}\mathcal E(z),$$
and $\mathcal D\subset\mathbb X$ is a \textit{universal dictionary} (see Definition \ref{dic}).  The objectives that we have in the paper are the next ones.

\begin{enumerate}
	\item[(S1)]  We need to fix conditions on $\mathcal E$ in order to guarantee the 
	existence of $u^* \in \X$ satisfying 
	$$
	\inf_{u \in \X}\mathcal{E}(u) = \min_{u \in \X}\mathcal{E}(u) = \mathcal E(u^*).
	$$
	To this end, we choose the classical framework developed by Akilov and Kantorovich in \cite[Chapter XV]{Kantorovich} (where, for some applied problems, they proved the convergence of the Method of Steepest Descend in a reflexive Banach space).
	\item[(S2)] We introduce and characterize what is a radial (and universal) dictionary $\Sc$ giving some examples extracted from the literature. 
	\item[(S3)] Finally, under the above conditions, we prove that the sequence $\{u_m\}_{m \in \N}$ generated by the greedy algorithm over $\Sc$  ``minimizes'' $\mathcal E$ in the sense that
	$$
	\lim_{m \rightarrow \infty}\mathcal E(u_m) = \mathcal{E}(u^*),
	$$
	and $\lim_{m\rightarrow \infty} u_m = u^*,$
	and also it satisfies
	$$
	\mathcal E(u_m)-\mathcal E(u^*) = O(m^{-1}),
	$$
	which is the same rate of convergence obtained 
	by Akilov and Kantorovich in \cite{Kantorovich} for the Method of Steepest Descend.
\end{enumerate}
The organization of the paper is the following: in the next section we introduce some preliminary definitions and results used along the text related with (S1). In Section~\ref{sec2}, related to the objective (S2), we introduce and characterize radial (universal) dictionaries where, moreover, we construct some examples by using well-known results from the literature. Section~\ref{sec3} is devoted to explain the greedy algorithm by (universal) dictionary optimization and to prove its convergence
and that its rate of convergence is the same that the Method of Steepest Descend implemented in a reflexive Banach space, that ends (S3). Some conclusions and final remarks will be given in Section~\ref{sec4} and, finally, we prove some techincal results in the Appendix A.

\section{Preliminary definitions and results}\label{sec1}
Let $\mathbb X$ be a reflexive Banach space endowed with a norm $\Vert \cdot \Vert$. We denote by $\X^*$ be the dual space of $\X$, endowed as usual with the dual norm $\|\cdot\|_*,$ and $\langle\cdot,\cdot\rangle: \X^*\times \X\rightarrow \R$ is the duality pairing. 
Along this paper, given $A \subset \X$ and $r > 0$, we will denote by
$$
B_{A,r}=\{x \in A: \|x\| \le r\},
$$
and the unit sphere in $\X$ as
$$
S_{\X}=\{x\in \mathbb X:\|x\|=1\},
$$
which is closed and bounded in $\X.$

Here, we are going to attack the following optimization problem
\begin{eqnarray}\label{main}
	\min_{x\in \X} \mathcal E(x),
\end{eqnarray}
where $\mathcal E: \X\rightarrow \mathbb R$ is a Fréchet differentiable 
functional satisfying the following properties:
\begin{itemize}
	\item[(A)] The Fr\'echet derivative $\mathcal E^{\prime}:\X \longrightarrow \X^*$ is locally Lipschitz continuous and
	\item[(B)] $\mathcal E$ is $\X$-elliptic, that is, there are two real numbers $\alpha>0$ 
	and $1 < s \le 2$ such that for every $x,y\in\X$,
	$$\langle \mathcal E'(x)-\mathcal E'(y), x-y\rangle \geq \alpha\Vert x-y\Vert^s.$$
\end{itemize}

\begin{remark}
	Since any Lipschitz continuous function is uniformly continuous, (A) implies that the Fr\'echet derivative
	$\mathcal E^{\prime}$ is also locally uniformly continuous.
\end{remark}

Conditions (A) and (B) are used in \cite[Chapter XV]{Kantorovich} to prove the convergence and also to compute the rate of convergence of the Method of Steepest Descend in reflexive Banach spaces. Moreover, in \cite[Chapter XV  \S 3]{Kantorovich} some applications to elliptic PDEs are
also given.

\begin{remark}
	Taking  the model problem posed in the last section, following \cite{CAN05}, assumption (A)
	is satisfied by the functional $\mathcal E.$ Recall that  if a functional $F:V \longrightarrow W,$
	where $V$ and $W$ are Banach spaces, is Fr\'echet differentiable at
	$v \in V$, then it is also locally Lipschitz continuous at $v \in
	V.$ Since the map $G:H^{1,p}_0(\Omega) \subset L^p(\Omega) \longrightarrow \mathbb{R},$
	given by $G(u)=\norm[u],$ is $\mathcal{C}^2$ for $p \ge 2$ 
	then $\mathcal E$ is also of class $\mathcal{C}^2.$ 
	Hence, $\mathcal E'$ is locally Lipschitz continuous
	in $H^{1,p}_0(\Omega)$ and assumption (B) holds.
	
\end{remark}

Now, we give some results and definitions  that will be useful along 
this paper.

\begin{lem}{\cite[Lemma 2.2]{CAN05}}\label{lemma_assumption}
	Under assumptions (A)-(B), we have
	\begin{itemize}
		\item[(a)] For all $x,y\in \X,$
		\begin{align}
			\mathcal E(x)-\mathcal E(y) \ge \langle \mathcal E'(y),x-y\rangle  +  \frac{\alpha}{s} \Vert
			x-y\Vert^s. \label{eq:J_strong_convexity_consequence}
		\end{align}
		\item[(b)] $\mathcal E$ is strictly convex.
		\item[(c)] $\mathcal E$ is bounded from below and coercive, i.e. $\lim_{\Vert
			x\Vert\rightarrow\infty} \mathcal E(x)=+\infty.$
	\end{itemize}
\end{lem}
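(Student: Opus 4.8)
The plan is to establish (a) first by reducing to a one-dimensional integral along the segment joining $y$ to $x$, and then to read off (b) and (c) as essentially immediate consequences.

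For (a), I would fix $x,y\in\X$ and introduce the scalar function $g:[0,1]\to\R$ given by $g(t)=\mathcal E(y+t(x-y))$. Since $\mathcal E$ is Fréchet differentiable and $t\mapsto y+t(x-y)$ is affine, the chain rule yields $g'(t)=\langle \mathcal E'(y+t(x-y)),x-y\rangle$; assumption (A) makes $\mathcal E'$ continuous, so $g'$ is continuous and the fundamental theorem of calculus applies, giving
$$
\mathcal E(x)-\mathcal E(y)=g(1)-g(0)=\int_0^1 \langle \mathcal E'(y+t(x-y)),x-y\rangle\,dt.
$$
Adding and subtracting $\langle \mathcal E'(y),x-y\rangle$ isolates the remainder $\int_0^1 \langle \mathcal E'(y+t(x-y))-\mathcal E'(y),x-y\rangle\,dt$. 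The crucial step is to evaluate the integrand using (B): applying the ellipticity relation to the pair $\big(y+t(x-y),\,y\big)$ and using $(y+t(x-y))-y=t(x-y)$ gives
$$
t\,\langle \mathcal E'(y+t(x-y))-\mathcal E'(y),x-y\rangle=\alpha\,\Vert t(x-y)\Vert^s=\alpha\,t^s\Vert x-y\Vert^s,
$$
so the integrand equals $\alpha\,t^{s-1}\Vert x-y\Vert^s$; integrating $t^{s-1}$ over $[0,1]$ contributes the factor $1/s$ and produces exactly $\frac{\alpha}{s}\Vert x-y\Vert^s$, which is (a). With the stated equality in (B) this argument in fact yields equality in (a), but only the inequality is needed downstream.

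For (b), I would apply (a) with base point $z=\lambda x+(1-\lambda)y$ for $\lambda\in(0,1)$ and $x\ne y$, once toward $x$ and once toward $y$, and then form the convex combination $\lambda(\cdots)+(1-\lambda)(\cdots)$. The linear (duality) terms collapse because $\lambda(x-z)+(1-\lambda)(y-z)=0$, leaving
$$
\lambda\mathcal E(x)+(1-\lambda)\mathcal E(y)-\mathcal E(z)\ge \frac{\alpha}{s}\big(\lambda\Vert x-z\Vert^s+(1-\lambda)\Vert y-z\Vert^s\big)>0,
$$
where strictness uses $x\ne y$ (so $z\ne x$ and $z\ne y$); this is precisely strict convexity. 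For (c), I would specialize (a) to $y=0$ to obtain $\mathcal E(x)\ge \mathcal E(0)+\langle \mathcal E'(0),x\rangle+\frac{\alpha}{s}\Vert x\Vert^s$, and bound the linear term below by $-\Vert \mathcal E'(0)\Vert_*\Vert x\Vert$. Thus $\mathcal E(x)\ge \phi(\Vert x\Vert)$ with $\phi(r)=\mathcal E(0)-\Vert\mathcal E'(0)\Vert_* r+\frac{\alpha}{s}r^s$. Since $s>1$, the superlinear term dominates and $\phi(r)\to+\infty$ as $r\to\infty$, giving coercivity; moreover $\phi$ is continuous and coercive on $[0,\infty)$, hence attains a finite minimum there, which furnishes a global lower bound for $\mathcal E$.

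The only genuinely delicate point is the one-dimensional reduction in (a): justifying that $g$ is continuously differentiable with $g'(t)=\langle \mathcal E'(y+t(x-y)),x-y\rangle$ (chain rule for Fréchet derivatives together with the continuity of $\mathcal E'$ guaranteed by (A)), so that the fundamental theorem of calculus is legitimate, and then performing the algebraic trick of invoking (B) on the pair $\big(y+t(x-y),\,y\big)$ to extract the factor $t^{s-1}$. Once (a) is in hand, parts (b) and (c) are routine.
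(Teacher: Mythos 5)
Your proof is correct: the line-integral reduction for (a), the convex-combination argument for (b), and the specialization $y=0$ plus superlinearity of $r^s$ for (c) are all sound, and the justifications (chain rule for Fr\'echet derivatives, continuity of $\mathcal E'$ from (A), the evaluation of (B) along the segment to extract $\alpha t^{s-1}\Vert x-y\Vert^s$) are the right ones. The paper itself gives no proof of this lemma --- it simply cites \cite[Lemma 2.2]{CAN05} --- and your argument is the standard one found there, so there is nothing to compare beyond noting that your write-up fills in a proof the paper omits.
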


\begin{remark}\label{remark_lemma_assumption}
From  \eqref{eq:J_strong_convexity_consequence} we deduce 
\begin{align}
			\mathcal E(x)-\mathcal E(y) \ge \langle \mathcal E'(y),x-y\rangle, \label{eq:J_strong_convexity_consequence0}
		\end{align}
and hence
\begin{align}
			\mathcal E(y)-\mathcal E(x) \le \langle \mathcal E'(y),y-x\rangle, \label{eq:J_strong_convexity_consequence1}
		\end{align}
holds for all $x,y\in \X.$
\end{remark}

The above lemma is the key of the following classical result.

\begin{theorem}{\cite[Theorem~7.4.4]{CIARLET}}\label{th:solution}
	Under assumptions (A)-(B), the problem \eqref{main} admits a
	unique solution $x\in \X$ which is equivalently characterized by
	\begin{align}
		\langle \mathcal{E}'(x),y\rangle  = 0\quad \forall
		y\in \X \label{eq:euler}
	\end{align}
\end{theorem}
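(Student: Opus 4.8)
The plan is to prove the statement in three stages: existence of a minimizer, uniqueness, and the equivalence of minimality with the Euler equation \eqref{eq:euler}. The convexity and coercivity already extracted in Lemma~\ref{lemma_assumption} do essentially all of the work, so the argument reduces to assembling those ingredients through the direct method of the calculus of variations.

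For existence I would start from the fact, recorded in Lemma~\ref{lemma_assumption}(c), that $\mathcal E$ is bounded from below; set $m = \inf_{x \in \X}\mathcal E(x) > -\infty$ and pick a minimizing sequence $\{x_n\}$ with $\mathcal E(x_n) \to m$. Coercivity (again Lemma~\ref{lemma_assumption}(c)) forces $\{x_n\}$ to be bounded, and since $\X$ is reflexive a subsequence converges weakly to some $x^* \in \X$; because $\mathcal E$ is convex (Lemma~\ref{lemma_assumption}(b)) and continuous, being Fr\'echet differentiable, it is weakly lower semicontinuous, whence $\mathcal E(x^*) \le \liminf_n \mathcal E(x_n) = m$ and therefore $\mathcal E(x^*) = m$. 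Alternatively, and more self-containedly, I would bypass reflexivity by exploiting the uniform convexity hidden in \eqref{eq:J_strong_convexity_consequence}: applying that inequality to $x_n$ and to $x_k$ against their midpoint $z = (x_n+x_k)/2$ and adding, the first-order terms cancel ($x_n+x_k-2z=0$) and one obtains
\[
\mathcal E(x_n) + \mathcal E(x_k) - 2\mathcal E(z) \ge \frac{\alpha}{s\,2^{s-1}}\,\Vert x_n - x_k \Vert^s .
\]
Since $\mathcal E(z) \ge m$, the left-hand side tends to $0$, so $\{x_n\}$ is Cauchy; completeness of $\X$ and continuity of $\mathcal E$ then give $x_n \to x^*$ with $\mathcal E(x^*) = m$.

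Uniqueness is immediate from strict convexity, Lemma~\ref{lemma_assumption}(b): two distinct minimizers $x^*, y^*$ would make $\mathcal E((x^*+y^*)/2) < m$, a contradiction (equivalently, the displayed midpoint estimate forces $\Vert x^*-y^*\Vert = 0$). For the characterization, if $x$ is the minimizer then for every $y \in \X$ the scalar function $t \mapsto \mathcal E(x+ty)$ attains a minimum at $t = 0$, and since its derivative there equals $\langle \mathcal E'(x), y\rangle$, we get $\langle \mathcal E'(x), y\rangle = 0$. Conversely, if $\langle \mathcal E'(x), y\rangle = 0$ for all $y$, then \eqref{eq:J_strong_convexity_consequence0}, applied with the point $y$ there taken to be $x$ and an arbitrary $z$ in place of its first argument, yields $\mathcal E(z) - \mathcal E(x) \ge \langle \mathcal E'(x), z - x\rangle = 0$ for every $z \in \X$, so $x$ minimizes $\mathcal E$.

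The step I expect to require the most care is the weak lower semicontinuity in the reflexive route, which rests on the standard fact (via Mazur's lemma) that a norm-continuous convex functional on a Banach space is weakly lower semicontinuous; if one prefers to keep the proof elementary and independent of reflexivity, the real content is instead the midpoint/uniform-convexity computation displayed above. In either case both routes are routine once Lemma~\ref{lemma_assumption} is available, so the theorem carries no serious difficulty beyond correctly invoking these preliminaries.
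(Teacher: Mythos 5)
The paper offers no proof of this theorem; it is imported verbatim from Ciarlet (Theorem~7.4.4), so there is no in-paper argument to compare against. Your proof is correct and is essentially the classical one behind that citation: coercivity bounds the minimizing sequence, the midpoint estimate derived from \eqref{eq:J_strong_convexity_consequence} (or reflexivity plus weak lower semicontinuity of the convex continuous $\mathcal E$) yields existence, strict convexity gives uniqueness, and the directional-derivative computation together with \eqref{eq:J_strong_convexity_consequence0} establishes the equivalence with \eqref{eq:euler}.
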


As we have explanied in the last section, the idea is to construct the solution of \eqref{main} 
by a ``greedy algorithm" using a set $\mathcal A \subset \X$ with some conditions and the milestone of 
this procedure is the following optimization program
\begin{align}
	\inf_{v\in \mathcal A} \mathcal E(v).\label{eq:inf_problem}
\end{align}
We mention \cite{EKE99} to study some results about the existence of a 
minimizer of these type of problems and some classical definitions and results to justify the existence of solutions
of \eqref{eq:inf_problem} are the following ones.

\begin{defn}
	We recall that a sequence $x_{m}\in \X$ is \emph{weakly convergent}
	if $\lim_{m\rightarrow\infty}\langle \varphi,  x_{m}  \rangle$
	exists for all $\varphi\in \X^{\ast}.$ We say that $\left(
	x_{m}\right) _{m\in\mathbb{N}}$ \emph{converges weakly to} $x\in \X$
	if $\lim_{m\rightarrow\infty} \langle \varphi, x_{m} \rangle
	=\langle \varphi, x\rangle$ for all $\varphi\in \X^{\ast}$. In this
	case, we write $x_{m}\rightharpoonup x$.
\end{defn}

\begin{defn}
	A subset $\mathcal A \subset \X$ is called \emph{weakly closed} if $x_{m}\in \mathcal A$
	and $x_{m}\rightharpoonup x$ implies $x\in \mathcal A.$
\end{defn}

\begin{remark}
	Of course, the condition of weakly closed is stronger than the condition to be close, 
	so if $\mathcal A \subset\X$ is weakly closed, then it is closed.
\end{remark}

Now, we focus our attention in some known properties about the functional $\mathcal E$.

\begin{defn}
	We say that a map $\mathcal E:\X \longrightarrow \mathbb{R}$ is weakly
	sequentially lower semi-continuous (respectively,  weakly
	sequentially continuous) in $\mathcal A \subset \X$ if for all $x \in\mathcal A$ and
	for all $x_{m}\in \mathcal A$ such that $x_{m}\rightharpoonup v$, it holds
	$\mathcal E(x) \le \lim \inf_{m\rightarrow \infty} \mathcal E(x_m)$ (respectively,
	$\mathcal E(v) = \lim_{m \rightarrow \infty} \mathcal E(x_m)).$
\end{defn}

\begin{prop}{\cite[Proposition 41.8 (H1)]{Zeidler}}\label{Zeid}
	Let $\X$ be a reflexive Banach space and let $\mathcal E:\X \rightarrow \Rbb$
	be a functional. If $\mathcal E$ is a convex and lower semi-continuous functional, then $\mathcal E$ is
	weakly sequentially lower semi-continuous.
\end{prop}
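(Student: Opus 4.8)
The plan is to reduce the statement to the classical fact, due to Mazur, that in a normed space the norm closure of a convex set coincides with its weak closure; equivalently, that every weakly convergent sequence admits convex combinations of its tail converging strongly to the weak limit. Reflexivity plays no role in this particular implication — it is needed elsewhere, to extract weakly convergent subsequences from bounded ones — so the argument goes through for an arbitrary normed space, and in particular for $\X$.

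First I would fix a sequence with $x_m \rightharpoonup x$ and set $L := \liminf_{m\to\infty}\mathcal E(x_m)$, the quantity to be bounded below by $\mathcal E(x)$. If $L=+\infty$ there is nothing to prove, so I assume $L<\infty$ and pass to a subsequence, still denoted $(x_m)$, along which $\mathcal E(x_m)\to L$; it still converges weakly to $x$. Fixing $\varepsilon>0$, I then choose an index $N$ with $\mathcal E(x_m)\le L+\varepsilon$ for all $m\ge N$.

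Next I would invoke Mazur's lemma to produce, for each $j\ge N$, a finite convex combination $y_j=\sum_k \lambda_k^{(j)} x_k$ with coefficients $\lambda_k^{(j)}\ge 0$ supported on indices $k\ge j$ and summing to one, such that $y_j\to x$ in norm. Since every index occurring in $y_j$ is at least $N$, convexity of $\mathcal E$ gives
\[
\mathcal E(y_j)\le \sum_k \lambda_k^{(j)}\,\mathcal E(x_k)\le (L+\varepsilon)\sum_k \lambda_k^{(j)}=L+\varepsilon.
\]
Because $y_j\to x$ strongly and $\mathcal E$ is (norm) lower semi-continuous, this yields $\mathcal E(x)\le \liminf_j \mathcal E(y_j)\le L+\varepsilon$. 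Letting $\varepsilon\downarrow 0$ gives $\mathcal E(x)\le L=\liminf_m \mathcal E(x_m)$, which is precisely weak sequential lower semicontinuity.

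The one genuine ingredient is Mazur's lemma itself, whose proof rests on the Hahn--Banach separation theorem; every other step is elementary bookkeeping. I expect this separation fact to be the only real obstacle, and it can be packaged equally cheaply through the epigraph: the set $\{(v,t)\in\X\times\R:\mathcal E(v)\le t\}$ is convex by convexity of $\mathcal E$ and norm-closed by lower semicontinuity, hence weakly closed by Hahn--Banach, and a weakly closed set is weakly sequentially closed; closedness of the epigraph under weak sequential limits is exactly the asserted semicontinuity.
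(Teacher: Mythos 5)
Your argument is correct, and it is the standard proof of this fact. The paper itself offers no proof here --- the proposition is quoted verbatim from Zeidler \cite{Zeidler}*{Proposition 41.8 (H1)} --- so there is nothing internal to compare against; both of your packagings (Mazur's lemma applied to tails of the sequence, or equivalently the convex norm-closed epigraph being weakly closed by Hahn--Banach separation) are the canonical route, and you are right that reflexivity of $\X$ is not needed for this implication.
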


One interesting consequence of Lemma~\ref{lemma_assumption}(b) is the following corollary.

\begin{cor}
Let $\X$ be a reflexive Banach space and let $\mathcal E:\X \rightarrow \Rbb$
	be a functional satisfying (A) and (B). Then $\mathcal E$ is
	weakly sequentially lower semi-continuous and coercive.
\end{cor}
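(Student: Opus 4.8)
The plan is to treat the two assertions separately and reduce each of them to a result already available in the excerpt. Coercivity requires no new work at all: it is exactly the statement of Lemma~\ref{lemma_assumption}(c), which holds under hypotheses (A) and (B). So that half of the corollary is immediate, and the entire effort goes into the weak sequential lower semi-continuity.

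For weak sequential lower semi-continuity I would invoke Proposition~\ref{Zeid}, whose hypotheses are that $\X$ be reflexive (assumed throughout) and that $\mathcal E$ be convex and lower semi-continuous. First I would check convexity: by Lemma~\ref{lemma_assumption}(b) the functional $\mathcal E$ is strictly convex, and strict convexity trivially entails convexity. Next I would verify lower semi-continuity: since $\mathcal E$ is assumed Fréchet differentiable on all of $\X$, it is in particular continuous, and a continuous real-valued functional is automatically lower semi-continuous. Having verified both hypotheses, Proposition~\ref{Zeid} then yields that $\mathcal E$ is weakly sequentially lower semi-continuous, which combined with the coercivity from Lemma~\ref{lemma_assumption}(c) completes the proof.

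The one point to be careful about is matching the two notions of lower semi-continuity in play: Proposition~\ref{Zeid} asks for lower semi-continuity in the strong (norm) topology, so I would make explicit that it is the standing Fréchet differentiability hypothesis on $\mathcal E$ that supplies continuity, and hence this strong lower semi-continuity, rather than appealing circularly to a weak notion. Apart from this bookkeeping, there is no genuine obstacle: the corollary is simply the conjunction of Lemma~\ref{lemma_assumption}(c) with Proposition~\ref{Zeid}, the latter applied under the convexity furnished by Lemma~\ref{lemma_assumption}(b).
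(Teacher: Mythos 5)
Your proof is correct and follows exactly the route the paper intends: the corollary is stated there without proof as a direct consequence of Lemma~\ref{lemma_assumption} (parts (b) and (c)) combined with Proposition~\ref{Zeid}, which is precisely your decomposition. Your explicit remark that the strong lower semi-continuity needed for Proposition~\ref{Zeid} comes from the continuity implied by Fr\'echet differentiability is a useful clarification that the paper leaves implicit.
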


The next theorem says us that in order to have a solution of \eqref{eq:inf_problem}, 
the set $\mathcal A \subset \X$ should be weakly closed.

\begin{theorem}{\cite[Theorem 2]{FN}}\label{th:closed_coercive}
Let $\X$ be a reflexive Banach space, and $\mathcal A \subset \X$ a weakly
	closed set. If $\mathcal E:\mathcal A \rightarrow \Rbb \cup\{\infty\}$ is weakly
	sequentially lower semi-continuous and coercive on $\mathcal A$, then problem
	\eqref{eq:inf_problem} has a solution.
\end{theorem}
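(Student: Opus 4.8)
The plan is to apply the direct method of the calculus of variations. First I would set $\gamma := \inf_{v \in \Sc} \mathcal E(v)$; assuming $\Sc \neq \emptyset$ and $\gamma < +\infty$ (otherwise $\mathcal E \equiv +\infty$ on $\Sc$ and every point is trivially a minimizer), I would select a minimizing sequence $\{x_m\}_{m \in \N} \subset \Sc$ with $\lim_{m \to \infty} \mathcal E(x_m) = \gamma$.

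The next step is to show that $\{x_m\}$ is bounded, which is where coercivity is used. If the sequence were unbounded, then some subsequence would satisfy $\Vert x_{m_k}\Vert \to \infty$, forcing $\mathcal E(x_{m_k}) \to +\infty$ and contradicting $\mathcal E(x_m) \to \gamma < +\infty$. Hence $\sup_m \Vert x_m \Vert < \infty$.

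The key step, and the place where reflexivity enters decisively, is the extraction of a weakly convergent subsequence from the bounded minimizing sequence. Since $\X$ is reflexive, the characterization of such spaces via weak sequential compactness of bounded sets (Eberlein--\v{S}mulian theorem) yields a subsequence and a point $x^* \in \X$ with $x_{m_k} \rightharpoonup x^*$. Because $\Sc$ is weakly closed and each $x_{m_k} \in \Sc$, the weak limit satisfies $x^* \in \Sc$, so $x^*$ is an admissible candidate.

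Finally I would conclude by weak sequential lower semi-continuity, which gives $\mathcal E(x^*) \le \liminf_{k \to \infty} \mathcal E(x_{m_k}) = \gamma$. Since $x^* \in \Sc$, the reverse inequality $\mathcal E(x^*) \ge \gamma$ holds by definition of the infimum, whence $\mathcal E(x^*) = \gamma$ and $x^*$ solves \eqref{eq:inf_problem}. The only genuine obstacle is the weak sequential compactness of bounded sets, which is exactly what reflexivity supplies; the remaining steps are a direct chaining of the hypotheses of weak closedness, weak lower semi-continuity, and coercivity.
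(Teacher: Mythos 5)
Your argument is correct: this is the standard direct method of the calculus of variations (minimizing sequence, boundedness via coercivity, weak sequential compactness of bounded sets in a reflexive space, weak closedness of $\Sc$, and weak sequential lower semi-continuity), and it establishes the statement. Note that the paper itself offers no proof of this theorem --- it is quoted verbatim as Theorem~2 of the cited reference \cite{FN} --- so there is nothing to compare against beyond observing that your route is the expected one. One small point you leave implicit: you assume $\gamma<+\infty$ but never rule out $\gamma=-\infty$; this is harmless, since your own chain of inequalities $\mathcal E(x^*)\le\liminf_k\mathcal E(x_{m_k})=\gamma$ together with $\mathcal E(x^*)\in\Rbb\cup\{\infty\}$ excludes $\gamma=-\infty$ a posteriori, but it is worth saying explicitly that the case is absorbed rather than assumed away.
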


\section{Radial dictionaries in reflexive Banach spaces}\label{sec2}

The aim of this section is to introduce the class of sets that we use in 
\eqref{eq:inf_problem} to construct the solution 
of \eqref{main}. We will call these sets radial (universal) dictionaries.

Recall that a dictionary $\Sc \subset \X,$ where $\X$ is a particular Banach space, 
is usually defined as a countable family of unit vectors and it is complete when 
the closure of $\mathrm{span}\, \D$ is equal to $\X.$ 
In some sense, a complete dictionary is a redundant basis 
of a vector space. 

In this paper, we introduce a different definition, 
where essentially the countable family of unit vectors is substituted 
by a closed set of norm one elements. Our definition takes 
into account two properties. The first
one is geometric: a radial dictionary should 
be a cone (it contains all the straight lines 
generated by its owns elements). 
The second property is topological: the whole cone should be
generated by some non-empty closed and bounded set.
As we will see below, any radial radial dictionary is a weakly closed cone, and as consequence, 
the aforementioned set can be identified 
with a cone generated by a non-empty closed
set of norm one elements. Before to introduce the concept of 
radial dictionary we need
the following definitions.

\begin{defn}
A set $C \subset \X$ is an extended cone if for all $x \in C$
the one dimensional subspace $\mathrm{span}\,\{x\} \subset C,$ that is,
$\lambda x \in C$ for all $\lambda \in \mathbb{R}.$
\end{defn}

\begin{defn}
Given a non-empty set $A \subset \X$ we define the extended cone generated by $A,$ 
denoted by $\langle A \rangle,$ as
$$
\langle A \rangle = \{\lambda a: \lambda \in \mathbb{R} \text{ and } a \in A\}.
$$
\end{defn}

Observe that for any  non-empty set $A \subset \X$ it holds  $\langle \langle A \rangle \rangle = \langle A \rangle.$ Moreover,
in order to construct the closed subspace containing the set $A$ we can follow
the next scheme given by the inclusions
$$
A \subset \langle A \rangle \subset \mathrm{span}\,  \langle A \rangle \subset \overline{ \mathrm{span}\, \langle A \rangle}^{\|\cdot\|}.
$$ 
In consequence, the cone generating by the set $A$ is the first geometric object that appears towards the construction
of the closed linear subspace $\overline{ \mathrm{span}\, \langle A \rangle}^{\|\cdot\|}.$

\begin{defn} A subset $\D$ is called a radial dictionary of $\X$ if it is a 
cone generated by some non-empty closed and bounded set in $\X$, that is, there exists a closed
and bounded set $\mathcal{K}\subset \X$ such that $\Sc = \langle \mathcal{K} \rangle.$
\end{defn}

Since in a Banach space any compact set is closed and bounded we have the following consequence.

\begin{lem}\label{compact}
	Let $\X$ be a reflexive Banach space and $\mathcal{K} \subset \X$ be a compact set. 
	Then the set
	$
	\langle \mathcal{K}\rangle
	$
	is a radial dictionary.
\end{lem}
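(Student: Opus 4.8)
The plan is to verify the definition of radial dictionary directly, exploiting the fact recorded immediately before the statement: in any Banach space a compact set is closed and bounded. First I would note that compactness is a purely metric–topological property, so that $\mathcal{K}$ being compact (and non-empty) already furnishes a non-empty, closed and bounded subset of $\X$; in particular reflexivity plays no role at this step and is only inherited from the ambient setting.

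Next I would check that $\langle \mathcal{K} \rangle$ is genuinely a cone. This is immediate from the definition of the cone generated by a set: if $x \in \langle \mathcal{K} \rangle$, say $x = \lambda a$ with $a \in \mathcal{K}$ and $\lambda \in \mathbb{R}$, then for every $\mu \in \mathbb{R}$ we have $\mu x = (\mu\lambda)\,a \in \langle \mathcal{K} \rangle$, so $\mathrm{span}\{x\} \subset \langle \mathcal{K} \rangle$. Equivalently, this follows from the already observed identity $\langle \langle A \rangle \rangle = \langle A \rangle$ applied to $A = \mathcal{K}$.

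Finally, I would invoke the definition of radial dictionary. Since $\mathcal{K}$ is a non-empty closed and bounded set and $\langle \mathcal{K} \rangle$ is by construction the cone it generates, the set $\langle \mathcal{K} \rangle$ is a cone generated by a non-empty closed and bounded set, namely $\mathcal{K}$ itself; hence it is a radial dictionary. I expect no real obstacle here, since the statement reduces to the definitions once compactness is weakened to ``closed and bounded.'' The only point deserving a line of care is the non-emptiness of $\mathcal{K}$, which is implicit in the hypothesis and guarantees both that $0 \in \langle \mathcal{K} \rangle$ and that the defining requirement of a radial dictionary is actually met.
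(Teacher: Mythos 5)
Your proof is correct and follows exactly the paper's (implicit) argument: the paper derives the lemma as an immediate consequence of the remark that in a Banach space every compact set is closed and bounded, so that $\langle \mathcal{K} \rangle$ satisfies the definition of a radial dictionary by construction. Your additional checks (that $\langle \mathcal{K} \rangle$ is a cone and that $\mathcal{K}$ is non-empty) are sound and merely make explicit what the paper leaves to the reader.
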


The next result characterizes a radial dictionary as a weakly closed 
cone.

\begin{theorem}\label{cone_char}
Let $\X$ be a reflexive Banach space and  $\Sc \subset \X.$ 
Then the following statements are equivalent.
\begin{enumerate}
\item[(a)] $\Sc$ is a radial dictionary of $\X.$
\item[(b)] $\Sc$ is a weakly closed cone in $\X.$
\end{enumerate}
\end{theorem}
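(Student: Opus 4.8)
The plan is to prove the two implications separately; $(b)\Rightarrow(a)$ is the soft direction, and essentially all the work sits in $(a)\Rightarrow(b)$. \textbf{For $(b)\Rightarrow(a)$}, assume $\Sc$ is a weakly closed cone and exhibit a concrete closed and bounded generator, the natural candidate being the truncation $\mathcal K := B_{\Sc,1} = \{x\in\Sc:\|x\|\le 1\}$. This set is non-empty because every non-empty cone contains $0$; it is bounded by construction; and it is norm-closed because a weakly closed set is norm-closed (the Remark preceding the statement), so that $\mathcal K=\Sc\cap B_{\X,1}$ is an intersection of two norm-closed sets. The identity $\langle\mathcal K\rangle=\Sc$ then follows from the cone property: the inclusion $\langle\mathcal K\rangle\subseteq\Sc$ is clear from $\mathcal K\subseteq\Sc$, and for the reverse one writes a nonzero $x\in\Sc$ as $x=\|x\|\,(x/\|x\|)$ with $x/\|x\|\in\Sc\cap S_\X\subseteq\mathcal K$ (again using that $\Sc$ is a cone), while $0=0\cdot a$ for any $a\in\mathcal K$. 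Notice this direction uses only norm-closedness, not the full strength of weak closedness.

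\textbf{For $(a)\Rightarrow(b)$}, write $\Sc=\langle\mathcal K\rangle$ with $\mathcal K\neq\emptyset$ closed and bounded. That $\Sc$ is a cone is immediate from the idempotency $\langle\langle\mathcal K\rangle\rangle=\langle\mathcal K\rangle$. The real content is weak closedness, which I would approach through a sequence $x_m=\lambda_m a_m$ in $\Sc$, with $\lambda_m\in\R$, $a_m\in\mathcal K$, and $x_m\rightharpoonup x$, aiming to show $x=\lambda a$ for some $\lambda\in\R$ and $a\in\mathcal K$. Since $\mathcal K$ is bounded the sequence $(a_m)$ is bounded, so reflexivity of $\X$ yields a subsequence with $a_{m_k}\rightharpoonup\bar a$. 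When $\bar a\neq 0$, choosing $\varphi\in\X^{\ast}$ with $\langle\varphi,\bar a\rangle\neq 0$ and writing $\lambda_{m_k}=\langle\varphi,x_{m_k}\rangle/\langle\varphi,a_{m_k}\rangle$ forces $\lambda_{m_k}\to\lambda$ for some scalar, whence $x_{m_k}\rightharpoonup\lambda\bar a$ and uniqueness of weak limits gives $x=\lambda\bar a$.

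The main obstacle, and the step I would think hardest about, is \emph{returning the limit into} $\Sc$: a weak limit $\bar a$ of points of $\mathcal K$ need not itself belong to $\mathcal K$, since $\mathcal K$ is assumed only norm-closed. Closing this gap is exactly where reflexivity and the specific geometry of a radial dictionary must be exploited, for instance by replacing $\mathcal K$ with the normalized slice $\Sc\cap S_\X$, working on the weakly compact unit ball, and arguing that the weak limit stays on a ray of $\Sc$. I would also have to dispose separately of the degenerate regimes: the case $x=0$, which is trivial once $0\in\Sc$, and the case where $\|a_m\|\to 0$ or $|\lambda_m|\to\infty$, where the normalization above must be arranged so that the rescaled generators still converge weakly to a genuine element of $\Sc$ governing $x$. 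Controlling these limiting regimes, rather than the algebra of the scalars, is where I expect the essential difficulty to lie.
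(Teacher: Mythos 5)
Your direction $(b)\Rightarrow(a)$ is correct and is essentially the paper's argument: the paper takes the generator $\mathcal{K}=\Sc\cap S_{\X}$ where you take $B_{\Sc,1}$, and your choice even handles the degenerate case $\Sc=\{0\}$ (where $\Sc\cap S_{\X}=\emptyset$) slightly more cleanly; your observation that only norm-closedness is used here is accurate. The problem is $(a)\Rightarrow(b)$. You set up the right sequence, extract $a_{m_k}\rightharpoonup\bar a$ by reflexivity and recover the scalar $\lambda$ via a functional not annihilating $\bar a$, but you stop exactly at the step that carries all the weight, namely showing that $\bar a\in\mathcal{K}$ (or at least that $\lambda\bar a\in\Sc$), and you offer only the hope that ``reflexivity and the geometry of a radial dictionary'' will close the gap. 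That is a genuine gap, and it cannot be closed from the stated hypotheses: in $\ell^{2}$ the set $\mathcal{K}=\{e_{1}+e_{n}: n\ge 2\}$ is norm-closed (uniformly discrete) and bounded, yet $e_{1}+e_{n}\rightharpoonup e_{1}$ and $e_{1}\notin\langle\mathcal{K}\rangle$, so $\langle\mathcal{K}\rangle$ is not weakly closed. Your fallback of replacing $\mathcal{K}$ by the normalized slice $\Sc\cap S_{\X}$ does not help, since that slice is again only norm-closed and the same phenomenon recurs.

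It is worth recording that the paper's own proof does not overcome the obstacle you isolated: it passes from boundedness of $(\lambda_{n},h_{n})$ in $\R\times\X$ to a subsequence converging in norm to some $(\lambda,h)$ and then invokes closedness of $\R\times\mathcal{K}$, i.e.\ it tacitly treats the closed bounded set $\mathcal{K}$ as norm-sequentially compact, which fails in infinite dimensions. (It also infers boundedness of $(\lambda_{n})$ from boundedness of $\lambda_{n}h_{n}$ and of $\mathcal{K}$, which fails if $\mathcal{K}$ contains elements of arbitrarily small norm.) So the difficulty you flagged is not a defect of your strategy but of the statement: the implication $(a)\Rightarrow(b)$ holds under a stronger hypothesis on the generator, e.g.\ $\mathcal{K}$ compact (the setting of Lemma~\ref{compact}) or $\mathcal{K}$ weakly closed and bounded away from the origin, and your argument would go through verbatim in either of those cases. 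As written, however, your proposal does not contain a proof of $(a)\Rightarrow(b)$.
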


\begin{proof}

Assume $\Sc=\langle \mathcal K \rangle$ is a radial dictionary for some $\mathcal K \subset \X$ closed and bounded. 
We only need to prove to prove that $\Sc$ is weakly closed.  To this end, 
take a sequence $(f_n)_{n \in \mathbb{N}} \subset \Sc$ 
	be such that $f_n \rightharpoonup f.$ Then $f_n=\lambda_n\, h_n$, for 
	$\lambda_n \in \mathbb{R}$ and $h_n \in \mathcal{K},$ is bounded in $\X$ 
	and hence $(\lambda_n)_{n \in \mathbb{N}}$ is bounded
	in $\mathbb{R}$ because $\mathcal{K}$ is. In consequence, the
	sequence $(\lambda_n,h_n)_{n \in \mathbb{N}} \subset \mathbb{R} \times \X$ 
	is also bounded, and there exists a subsequence, also denoted by 
	$(\lambda_n,h_n)_{n \in \mathbb{N}},$ such that
	$\lim_{n\rightarrow \infty} (\lambda_n,h_n) = (\lambda,h).$ Moreover,
	since $\mathbb{R} \times \mathcal{K}$ is closed in $\mathbb{R} \times \X,$
	then $(\lambda,h) \in \mathbb{R} \times \mathcal{K}.$
	
	Now, the bilinear map $\Phi:\mathbb{R} \times \X \longrightarrow \X$ given
	by $\Phi(\lambda,x)=\lambda \, x$ is clearly continuous. Then
	$$\lim_{n \rightarrow \infty} f_n = \lim_{n \rightarrow \infty}\Phi(\lambda_n,h_n) = \Phi(\lim_{n\rightarrow \infty} (\lambda_n,h_n)) = \Phi(\lambda,h) = \lambda \, h,$$
	and hence $f=\lambda\, h,$ because $f_n \rightharpoonup \lambda \, h.$ This proves (b)

	Assume that  $\Sc \subset \X$ is a weakly closed cone and hence $\Sc$ is also closed. 
Consider the closed and bounded set $S_{\X} \subset \X.$ 
Then $\mathcal{K} = \Sc \cap S_{\X}$ is closed
and bounded in $\X.$ Clearly, 
$\langle \mathcal{K} \rangle  \subset \Sc.$
To conclude, take $x \in \Sc,$ if $x = 0$ then $x \in \langle \mathcal{K} \rangle,$ otherwise $\mathrm{span}\,\{x\} \subset \Sc,$
and then $x/\|x\| \in \mathcal{K}.$ Hence $x \in \langle \mathcal{K} \rangle$ and $\langle \mathcal{K} \rangle = \Sc$ and the proof is done.

\end{proof}

From the proof of Theorem~\ref{cone_char} we deduce that every radial dictionary of $\X$
can be written as
$\Sc = \langle \Sc \cap S_{\X} \rangle,$
where $\Sc \cap S_{\X}$ is a closed set of norm one elements.

\begin{defn}\label{dic}
	A subset $\D$ is called universal dictionary of $\X,$ if $\Sc$ is a radial dictionary of $\X$ with the following extra condition:
the linear subspace $\text{span }\mathcal D$ is dense in $\X.$
\end{defn}

Of course, every universal dictionary is indeed a radial dictionary, but the converse 
could be false (see Section~\ref{ex1} below). However, it is easy to see that given 
a radial dictionary $\Sc \subset \X,$ taking into account the Banach 
space $\overline{\mathrm{span}\, \Sc}^{\|\cdot\|_\X},$ generates an universal dictionary 
as the following corollary shows.

\begin{cor}
	Let $(\Y,\|\cdot\|_\Y)$ be a reflexive Banach space and $\mathcal{K} \subset \Y$ be a closed and bounded
	set. Then the set
	$\langle \mathcal{K} \rangle$
	is an universal dictionary of $\X = \overline{\mathrm{span\, \langle \mathcal{K} \rangle}}^{\|\cdot\|_\Y} \subset \Y.$
\end{cor}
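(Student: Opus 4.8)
The plan is to verify directly the three requirements in the definition of a universal dictionary: that the ambient $\X$ is a reflexive Banach space, that $\langle \mathcal{K} \rangle$ is a radial dictionary of $\X$, and that $\mathrm{span}\, \langle \mathcal{K} \rangle$ is dense in $\X$.

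First I would address the ambient space. By construction $\X = \overline{\mathrm{span}\, \langle \mathcal{K} \rangle}^{\|\cdot\|_\Y}$ is a norm-closed linear subspace of $\Y$, equipped with the restriction of $\|\cdot\|_\Y$. Since a norm-closed subspace of a reflexive Banach space is itself reflexive, $\X$ is a reflexive Banach space. This is the one place where a genuine (though entirely standard) functional-analytic input is needed; the remainder of the argument is bookkeeping about the subspace topology.

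Next I would check that $\langle \mathcal{K} \rangle$ is a radial dictionary of $\X$. From the chain of inclusions $\mathcal{K} \subset \langle \mathcal{K} \rangle \subset \mathrm{span}\, \langle \mathcal{K} \rangle \subset \X$ one reads off $\mathcal{K} \subset \X$. Because $\X$ carries the subspace topology inherited from $\Y$ and $\mathcal{K}$ is closed in $\Y$, the identity $\mathcal{K} = \mathcal{K} \cap \X$ shows that $\mathcal{K}$ is closed in $\X$; and since the norm on $\X$ is just the restriction of $\|\cdot\|_\Y$, the set $\mathcal{K}$ remains bounded in $\X$. Hence $\langle \mathcal{K} \rangle$ is the cone generated by a non-empty closed and bounded subset of $\X$, which is precisely the definition of a radial dictionary of $\X$.

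Finally I would verify density. The linear spans coincide, $\mathrm{span}\, \langle \mathcal{K} \rangle = \mathrm{span}\, \mathcal{K}$, since $\mathcal{K} \subset \langle \mathcal{K} \rangle \subset \mathrm{span}\, \mathcal{K}$. By definition $\X$ is the closure of $\mathrm{span}\, \langle \mathcal{K} \rangle$ in $\Y$, and a set is always dense in its own closure, so $\mathrm{span}\, \langle \mathcal{K} \rangle$ is dense in $\X$. Combining the three steps, $\langle \mathcal{K} \rangle$ satisfies the definition of a universal dictionary of $\X$. The only real subtlety is the reflexivity of the closed subspace $\X$; once that is granted, the identification of $\mathcal{K}$ as a closed and bounded subset of $\X$ and the density of its span are immediate from the construction.
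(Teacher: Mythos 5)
The paper states this corollary without proof, treating it as an immediate consequence of the definitions, and your verification is exactly the routine check being omitted: reflexivity of the closed subspace $\X$, closedness and boundedness of $\mathcal{K}$ inside $\X$ with the subspace topology, and density of $\mathrm{span}\,\langle\mathcal{K}\rangle$ in its own closure. Your argument is correct and takes the only natural route; no issues.
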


Some examples of universal dictionaries are the following.

\subsection{Basis based radial dictionaries}\label{ex1}
Let us consider $(\Y,\|\cdot\|)$ be a reflexive Banach space and $\mathcal{K}=\{x_1,\ldots,x_n\} \subset \Y$ be a set of $n$-linearly independent
	vectors in $\Y.$ Clearly, it is a closed and bounded set. Thus
	$$
	\langle \mathcal{K} \rangle=\{\lambda \, x_i: \lambda \in \mathbb{R} \text{ and } 1 \le i \le n \}
	$$
	is an universal dictionary of  $\X_n = \mathrm{span}\,\langle \mathcal{K} \rangle.$ 

	Assume now that $(\Y,\|\cdot\|)$ is a separable Hilbert
	space. Consider $\mathcal{B}=\{(g_i)_{i \in \mathbb{N}}: \|g_i\|=1\}$ be an orthogonal basis of $\Y.$ Then 
	$\mathcal{K} = \overline{\mathcal{B}}^{\|\cdot\|}$ is closed and bounded in $\Y$ and hence
	$\langle \mathcal{K}\rangle$ is an universal dictionary of $\Y.$

\subsection{A natural radial dictionary for a non-linear convex problem}\label{ex1.5}

Let $\X$ be a reflexive Banach space. Assume 
that $\mathcal E:\X \longrightarrow \mathbb{R}$ satisfies (A)-(B) and $u^*\in \X$ 
solves \eqref{main}.
Consider the closed set
$$
\Omega_0 =\{u \in \X: \mathcal E(u^*) \le \mathcal E(u) \le \mathcal E(0)\} = \mathcal E^{-1}([\mathcal E(u^*),\mathcal E(0)]),
$$
where we can use Proposition \ref{lem:J_decrease}  to define the set above taking the 0. From Lemma~\ref{lemma_assumption}(c), $\Omega_0$ is also bounded. Then $\langle \Omega_0 \rangle$
is an universal dictionary of $\X_{\mathcal E} = \overline{\mathrm{span}\, \langle \Omega_0 \rangle}^{\|\cdot\|}.$
Moreover,
$$
\mathcal E(u^*)=\min_{x \in \X}\mathcal E(x) = \min_{x \in \X_{\mathcal E}}\mathcal E(x).
$$

\subsection{A Radial dictionary for an Approximation Laplacian Fractional Algorithm}\label{ex2}
In \cite{CANDELA1} the authors consider the weakly closed cone
$$
\Sc =\{u \in L^2([0,1]): u(x)=\lambda \, x^{\beta} \text{ for } \lambda \in \mathbb{R} \text{ and } \beta \in [0,2]\}
$$
in the Hilbert space $L^2[(0,1])$ to explain the convergence of a Approximation Laplacian Fractional Algorithm (ALFA). 
This algorithm was previously introduced in the framework of a 
fractional deconvolution model used in image restoration \cite{CANDELA2}. Observe that
$$
\|u\|_{L^2} = \frac{|\lambda|}{\sqrt{1+2\beta}} \text{ holds for all } u = \lambda \, x^{\beta} \in \Sc.
$$
Thus, $\Sc = \langle \mathcal{K} \rangle,$ where
$$
\mathcal{K} = \{u \in L^2([0,1]): u(x)= x^{\beta} \text{ for } \beta \in [0,2]\},
$$
is a closed and bounded set in $L^2[0,1].$ We conclude that $\Sc$ is an universal dictionary for the Hilbert space
$\overline{\mathrm{span}\, \Sc}^{\|\cdot\|_{L^2}}.$

\subsection{Tensor based radial dictionaries}\label{ex3}
Let $D:=\{1,2,\ldots,d\}$ be  a finite index set and $(V_{\alpha},\|\cdot\|_{\alpha})$ be a Banach space
for each index $\alpha \in D.$  we refer to Greub \cite{Greub} for 
the definition of the algebraic tensor space $_{a}\bigotimes
_{\alpha \in D} V_{\alpha}$ generated from vector 
spaces $V_{\alpha}$ $\left(\alpha \in D \right).$
As underlying field we choose $\mathbb{R},$ the suffix `$a$' in 
$_{a}\bigotimes_{\alpha \in D} V_{\alpha}$ 
refers to the `algebraic' nature of the tensor space. By
definition, all elements of 
\begin{equation*}
\mathbf{V}_D:=\left. _{a}\bigotimes_{\alpha \in D} V_{\alpha}\right.
\end{equation*}
are \emph{finite} linear combinations of elementary tensors $\mathbf{v}%
=\bigotimes_{\alpha \in D} v_{\alpha}$ $\left( v_{\alpha}\in V_{\alpha}\right) .$ 

Next, we introduce the injective norm $\left\Vert
\cdot\right\Vert
_{\vee}$ for $\mathbf{v}\in\mathbf{V}=\left.  _{a}%
\bigotimes_{\alpha \in D} V_{\alpha}\right.,$ by%
\begin{equation}
\left\Vert \mathbf{v}\right\Vert _{\vee}:=\sup\left\{
\frac{\left\vert \left(
\varphi^{(1)}\otimes\varphi^{(2)}\otimes\ldots\otimes\varphi^{(d)}\right)
(\mathbf{v})\right\vert }{\prod_{\alpha \in D}\Vert\varphi^{(\alpha)}\Vert_{\alpha}^{\ast}%
}:0\neq\varphi^{(\alpha)}\in V_{\alpha}^{\ast},\, \alpha \in D\right\}  .
\label{(Norm ind*(V1,...,Vd)}%
\end{equation}
Assume that $\|\cdot\|$ is another norm in $\mathbf{V}_D$ not weaker than the injective norm,
that is, there exists a constant $C$ be such that $\|\mathbf{v}\|_{\vee} \le C \|\mathbf{v}\|$
holds for all $\mathbf{v} \in \mathbf{V}_D.$

Let us consider $\Y= \overline{ \mathbf{V}_D}^{\|\cdot\|}$ 
a tensor Banach space given by 
the completion of $\mathbf{V}_D$ under $\|\cdot\|.$ Given $\mathbf{r}=(r_{\alpha})_{\alpha\in D} \in \mathbb{N}^{\#D}$ 
(where $r_{\alpha} \ge 1$ for all $\alpha  \in D$), we introduce the set of tensors in Tucker format with bounded rank $\mathbf{r}$ as
$$
	\mathcal{M}_{\le \mathbf{r}}\left( \mathbf{V}_D\right)=\left\{
	\mathbf{v} \in  \mathbf{V}_D: \begin{array}{c}
	\mathbf{v} \in \left._a\bigotimes_{\alpha\in D} U_{\alpha} \right. \text{ where } U_{\alpha} \text{ is a subspace} \\
	\text{ in } V_{\alpha} \text{ with } \dim U_{\alpha} \le r_{\alpha} \text{ for each } \alpha \in D
	\end{array}
	\right\}.
	$$
In \cite{FalcoHackbusch} (Proposition 4.3) the following result has been proved.
\begin{prop}
\label{tr weakly closed}Let $(\overline{ \mathbf{V}_D}^{\|\cdot\|},\|\cdot\|)$ be a
Banach tensor space with a norm not weaker than the injective norm. 
Then the set $\mathcal{M}_{\le \mathbf{r}}\left(\mathbf{V}_D\right)$ is
weakly closed.
\end{prop}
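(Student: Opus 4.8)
The plan is to reduce the weak closedness of $\mathcal{M}_{\le\mathbf{r}}(\mathbf{V}_D)$ to a \emph{closed algebraic condition} on scalars computed by rank-one functionals, and then to use the domination $\|\mathbf{v}\|_{\vee}\le C\|\mathbf{v}\|$ to transfer that closedness through weak limits. First I would recall the characterization of the Tucker format by minimal subspaces: for each mode $\alpha\in D$ there is a minimal subspace $U_\alpha^{\min}(\mathbf{v})\subseteq V_\alpha$, spanned by the partial contractions $(\mathrm{id}_\alpha\otimes\Phi)(\mathbf{v})$ with $\Phi\in{}_{a}\bigotimes_{\beta\neq\alpha}V_\beta^*$, and $\mathbf{v}\in\mathcal{M}_{\le\mathbf{r}}(\mathbf{V}_D)$ holds if and only if $\dim U_\alpha^{\min}(\mathbf{v})\le r_\alpha$ for every $\alpha\in D$. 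Consequently $\mathcal{M}_{\le\mathbf{r}}(\mathbf{V}_D)=\bigcap_{\alpha\in D}\{\mathbf{v}:\dim U_\alpha^{\min}(\mathbf{v})\le r_\alpha\}$, and since a finite intersection of weakly closed sets is weakly closed, it suffices to treat a single fixed mode $\alpha$.

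Second, for that fixed $\alpha$ I would translate the condition $\dim U_\alpha^{\min}(\mathbf{v})\le r_\alpha$ into the simultaneous vanishing of all $(r_\alpha+1)\times(r_\alpha+1)$ determinants $\det\bigl[(\psi_i\otimes\Phi_j)(\mathbf{v})\bigr]_{i,j=0}^{r_\alpha}$, where $\psi_0,\dots,\psi_{r_\alpha}\in V_\alpha^*$ and $\Phi_0,\dots,\Phi_{r_\alpha}$ range over elementary functionals $\bigotimes_{\beta\neq\alpha}\varphi^{(\beta)}$ with $\varphi^{(\beta)}\in V_\beta^*$. The point is that $\dim U_\alpha^{\min}(\mathbf{v})\ge r_\alpha+1$ occurs exactly when one can produce $r_\alpha+1$ linearly independent contractions $(\mathrm{id}_\alpha\otimes\Phi_j)(\mathbf{v})\in V_\alpha$, and linear independence of these is detected by a nonvanishing determinant $\det\bigl[\psi_i\bigl((\mathrm{id}_\alpha\otimes\Phi_j)(\mathbf{v})\bigr)\bigr]=\det\bigl[(\psi_i\otimes\Phi_j)(\mathbf{v})\bigr]$. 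Because the contraction is linear in $\Phi$, restricting $\Phi_j$ to elementary functionals costs nothing.

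Third, each pairing $\psi_i\otimes\Phi_j$ is a rank-one functional of the form $\varphi^{(1)}\otimes\cdots\otimes\varphi^{(d)}$, so by the definition of the injective norm together with the hypothesis $\|\mathbf{v}\|_{\vee}\le C\|\mathbf{v}\|$ it satisfies $\bigl|(\varphi^{(1)}\otimes\cdots\otimes\varphi^{(d)})(\mathbf{v})\bigr|\le C\,\|\mathbf{v}\|\,\prod_{\alpha\in D}\|\varphi^{(\alpha)}\|_\alpha^*$. Hence it is bounded on $(\mathbf{V}_D,\|\cdot\|)$ and extends to an element of $\mathbb{X}^*$; this is precisely where the assumption ``not weaker than the injective norm'' enters, since it guarantees that these scalar contractions are weakly continuous on $\mathbb{X}$. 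Then, given $\mathbf{v}_n\in\mathcal{M}_{\le\mathbf{r}}(\mathbf{V}_D)$ with $\mathbf{v}_n\rightharpoonup\mathbf{v}$, every entry $(\psi_i\otimes\Phi_j)(\mathbf{v}_n)$ converges to $(\psi_i\otimes\Phi_j)(\mathbf{v})$, and since the determinant is a polynomial in its entries it passes to the limit; as all such determinants vanish along the sequence, they vanish at $\mathbf{v}$, giving $\dim U_\alpha^{\min}(\mathbf{v})\le r_\alpha$ for each $\alpha$ and hence $\mathbf{v}\in\mathcal{M}_{\le\mathbf{r}}(\mathbf{V}_D)$.

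The step I expect to be the main obstacle is the first one, carried out in the completed setting: one must verify that the minimal-subspace dimensions are well defined and finite for the weak limit $\mathbf{v}$, which a priori lies only in $\mathbb{X}$ and not in the algebraic space $\mathbf{V}_D$, and that the scalar determinant criterion faithfully computes them there. This is exactly what the injective-norm domination delivers, since it lets the partial contractions and their scalar pairings survive the passage to the completion; it also forces the limit, once all the higher determinants are seen to vanish, to lie back in $\mathbf{V}_D$ as a genuine finite-rank Tucker tensor. The remaining verifications -- the equivalence between Tucker membership and bounded mode ranks, and the elementary linear algebra of the determinant test -- are routine and standard in the tensor-format literature.
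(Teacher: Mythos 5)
The paper does not prove this proposition; it is quoted verbatim from Falc\'o--Hackbusch \cite{FalcoHackbusch} (Proposition 4.3), so there is no in-text proof to compare against. Your strategy is essentially the one used in that reference: reduce membership in $\mathcal{M}_{\le\mathbf{r}}(\mathbf{V}_D)$ to the bounds $\dim U_\alpha^{\min}(\mathbf{v})\le r_\alpha$, detect a violation of such a bound by a nonvanishing $(r_\alpha+1)\times(r_\alpha+1)$ determinant whose entries $(\psi_i\otimes\Phi_j)(\mathbf{v})$ are values of rank-one functionals, observe that the domination $\|\cdot\|_{\vee}\le C\|\cdot\|$ makes these functionals bounded, hence elements of $\X^*$ and weakly continuous, and pass the vanishing of all such determinants to the weak limit. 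That part of your argument is sound, and the reduction to elementary $\Phi_j$ by linearity is fine.

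The one step that is asserted rather than proved is precisely the one you flag as the main obstacle, and it is not a consequence of the injective-norm domination alone in the direct way you suggest. After the determinant argument you only know that the weak limit $\mathbf{v}$, which lives in the completion $\overline{\mathbf{V}_D}^{\|\cdot\|}$, has $\dim U_\alpha^{\min}(\mathbf{v})\le r_\alpha$ for every $\alpha$, where $U_\alpha^{\min}(\mathbf{v})$ is the span of the (continuously extended) partial contractions. To conclude $\mathbf{v}\in\mathcal{M}_{\le\mathbf{r}}(\mathbf{V}_D)$ you must still show that $\mathbf{v}$ is an \emph{algebraic} tensor lying in ${}_a\bigotimes_{\alpha\in D}U_\alpha$ with $\dim U_\alpha\le r_\alpha$. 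The standard route is the representation theorem $\mathbf{v}\in\overline{{}_a\bigotimes_{\alpha\in D}U_\alpha^{\min}(\mathbf{v})}^{\|\cdot\|}$ together with the observation that when all $U_\alpha^{\min}(\mathbf{v})$ are finite dimensional this algebraic tensor product is finite dimensional, hence closed, so the closure is superfluous; but that representation theorem is itself a nontrivial result of \cite{FalcoHackbusch} (again relying on the injective-norm hypothesis), not a routine verification. As written, your proof has a citable but genuine hole at exactly this point; filling it requires either invoking that theorem explicitly or reproving it, and without it the argument only shows that the limit has bounded mode ranks in the extended sense, not that it belongs to $\mathbf{V}_D$.
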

The set $\mathcal{M}_{\le \mathbf{r}}\left(\mathbf{V}_D\right)$ is clearly a cone and, by the above proposition, it is a weakly closed cone.
Theorem~\ref{cone_char} implies that the set $\mathcal{M}_{\le \mathbf{r}}\left(\mathbf{V}_D\right)$ is a radial dictionary. The linear subspace $\mathrm{span} \,\mathcal{M}_{\le \mathbf{r}}\left(\mathbf{V}_D\right)$ dense in $\Y,$ because it contains the whole set of elementary tensors. Thus, $\mathcal{M}_{\le \mathbf{r}}\left(\mathbf{V}_D\right)$ is an universal dictionary in $\Y.$

\subsection{Neural networks based radial dictionaries}\label{ex4}

To introduce the radial dictionary composed of Neural Networks it 
necessary to distinguish between a \emph{neural network}
as a set of weights and the associated function implemented by the network,
which we call its \emph{realization}. To explain this distinction, 
let us fix numbers $L, N_0, N_1, \dots, N_{L} \in \N$.
We say that a family $\Phi = \big( (A_\ell,b_\ell) \big)_{\ell = 1}^L$ of matrix-vector tuples
of the form $A_\ell \in  \R^{N_{\ell} \times N_{\ell-1}}$ and $b_\ell \in \R^{N_\ell}$
is a \emph{neural network}.

We call $S\coloneqq(N_0, N_1, \dots, N_L)$ the \emph{architecture} of $\Phi$;
furthermore $N(S)\coloneqq \sum_{\ell = 0}^L N_\ell$ is called the \emph{number of neurons of $S$}
and $L = L(S)$ is the \emph{number of layers of $S$} and $d\coloneqq N_0$ the \emph{input dimension} of $\Phi.$ 
For a given architecture $S$, we denote by $\cN(S)$
the \emph{set of neural networks with architecture $S$}.

Defining the realization of such a network $\Phi = \big( (A_\ell,b_\ell) \big)_{\ell=1}^L$
requires two additional ingredients: a so-called \emph{activation function}
$\varrho : \R \to \R$, and a domain of definition $\Omega \subset \R^{N_0}$.
Given these, the \emph{realization of the network} $\Phi = \big( (A_\ell,b_\ell) \big)_{\ell=1}^L$
is the function
\begin{align*}
  \Realization_\varrho^\Omega \left( \Phi \right) :
  \Omega \to \R , \ \
  x \mapsto x_L \, ,
\end{align*}
where $x_L$ results from the following scheme:
\begin{equation*}
  \begin{split}
    x_0 &\coloneqq x, \\
    x_{\ell} &\coloneqq \varrho(A_{\ell} \, x_{\ell-1} + b_\ell),
    \quad \text{ for } \ell = 1, \dots, L-1,\\
    x_L &\coloneqq A_{L} \, x_{L-1} + b_{L},
  \end{split}
\end{equation*}
and where $\varrho$ acts component-wise;
that is, $\varrho(x_1,\dots,x_d) := (\varrho(x_1),\dots,\varrho(x_d))$.

Before to give the next result, let us note that the set $\cN(S)$ of all neural networks
(that is, the network weights) with a fixed architecture forms a finite-dimensional vector space,
which we equip with the norm
\[
  \|\Phi\|_{\cN(S)}
  \coloneqq \| \Phi \|_{\mathrm{scaling}} + \max_{\ell = 1,\dots,L} \|b_\ell\|_{\max}
\]
\[
  \qquad \text{for} \qquad \Phi = \big( (A_\ell, b_\ell) \big)_{\ell=1}^L \in \cN (S) ,
\]
where $\| \Phi \|_{\mathrm{scaling}} \coloneqq \max_{\ell = 1,\dots,L } \| A_\ell \|_{\max}$.
If the specific architecture of $\Phi$ does not matter,
we simply write $\|\Phi\|_{\mathrm{total}}\coloneqq  \|\Phi\|_{\cN(S)}$.
In addition, if $\varrho$ is continuous, we denote the realization map by
\begin{equation}
  \Realization^{\Omega}_{\varrho} :
  \cN(S) \to     C(\Omega ; \R^{N_L}), ~
  \Phi                 \mapsto \Realization^{\Omega}_{\varrho} (\Phi).
  \label{eq:RealizationMapping}
\end{equation}
In \cite{Petersen1} the authors proves the following result (Proposition 3.5).

\begin{prop}\label{prop:bdWeights}
  Let $S = (d, N_1, \dots, N_L)$ be a neural network architecture,
  let $\Omega \subset \R^d$ be compact, let furthermore $p \in (0,\infty)$,
  and let $\varrho: \R \to \R$ be continuous.
  For $C > 0$, let
  \[
    \Theta_C := \big\{ \Phi\in \cN(S):\|\Phi\|_{\mathrm{total}}\leq C \big\}.
  \]
  Then the set $\mathrm{R}_{\varrho}^{\Omega}(\Theta_C)$ is compact in $C(\Omega)$
  as well as in $L^p_{\mu}(\Omega)$, for any finite Borel measure $\mu$ on $\Omega$
  and any $p \in (0, \infty)$.
\end{prop}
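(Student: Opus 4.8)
The plan is to exploit the structure of the statement: $\Theta_C$ is a closed and bounded subset of the finite-dimensional normed space $\cN(S)$, so by the Heine--Borel theorem it is compact. If we can show that the realization map $\Realization^{\Omega}_{\varrho} : \cN(S) \to C(\Omega;\R^{N_L})$ is continuous when $C(\Omega;\R^{N_L})$ carries the supremum norm, then $\Realization^{\Omega}_{\varrho}(\Theta_C)$ is the continuous image of a compact set, hence compact in $C(\Omega;\R^{N_L})$. Compactness in $L^p_\mu(\Omega)$ would then follow from the continuity of the natural inclusion $C(\Omega) \hookrightarrow L^p_\mu(\Omega)$, again because a continuous image of a compact set is compact.

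The core of the argument, and the step I expect to be the main obstacle, is the continuity of the realization map, since $\varrho$ is only assumed continuous rather than Lipschitz. I would first establish a uniform boundedness property by induction on $\ell$: as $\Phi=((A_\ell,b_\ell))_{\ell=1}^L$ ranges over $\Theta_C$ and $x$ ranges over the compact set $\Omega$, each intermediate quantity $x_\ell$ of the realization scheme remains in a fixed compact set $K_\ell\subset\R^{N_\ell}$. Indeed, since $\|A_\ell\|_{\max}\le C$, $\|b_\ell\|_{\max}\le C$ and $x_{\ell-1}\in K_{\ell-1}$, the pre-activation $A_\ell x_{\ell-1}+b_\ell$ ranges over a bounded set whose closure is a compact set $P_\ell$; because $\varrho$ is continuous it maps $P_\ell$ into a bounded set, and we let $K_\ell$ be the closure of that image.

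With the compact sets $K_\ell$ and $P_\ell$ in hand, I would prove continuity layer by layer. Given $\Phi_n \to \Phi$ in $\cN(S)$, I claim that the partial realizations $x \mapsto x_\ell^{(n)}$ converge to $x \mapsto x_\ell$ uniformly on $\Omega$. For $\ell=0$ this is trivial. For the inductive step, the affine part $(A_\ell,b_\ell,x_{\ell-1})\mapsto A_\ell x_{\ell-1}+b_\ell$ depends continuously on the parameters uniformly for $x_{\ell-1}\in K_{\ell-1}$, so the pre-activations converge uniformly; since all of them remain in the fixed compact set $P_\ell$, the uniform continuity of $\varrho$ on $P_\ell$ upgrades uniform convergence of the pre-activations to uniform convergence of $x_\ell^{(n)}=\varrho(A_\ell x_{\ell-1}^{(n)}+b_\ell^{(n)})$ to $x_\ell$. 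Carrying this through the final affine layer yields $\Realization^{\Omega}_{\varrho}(\Phi_n)\to\Realization^{\Omega}_{\varrho}(\Phi)$ in $C(\Omega;\R^{N_L})$, which is the desired continuity.

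Finally, for the $L^p$ statement I would use that $\Omega$ is compact and $\mu$ is a finite Borel measure, so that for any $f,g\in C(\Omega)$ one has $\int_\Omega |f-g|^p\,d\mu \le \mu(\Omega)\,\|f-g\|_\infty^p$; hence uniform convergence implies convergence in $L^p_\mu(\Omega)$ and the inclusion $C(\Omega)\hookrightarrow L^p_\mu(\Omega)$ is continuous for every $p\in(0,\infty)$. Composing this inclusion with the already established compactness of $\Realization^{\Omega}_{\varrho}(\Theta_C)$ in $C(\Omega)$ gives compactness of $\Realization^{\Omega}_{\varrho}(\Theta_C)$ in $L^p_\mu(\Omega)$, completing the proof.
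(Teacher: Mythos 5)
Your argument is correct. Note, however, that the paper does not prove this statement at all: it is quoted verbatim from Petersen--Raslan--Voigtlaender (cited as Proposition 3.5 of \cite{Petersen1}), so there is no in-paper proof to compare against. Your route --- Heine--Borel compactness of $\Theta_C$ in the finite-dimensional space $\cN(S)$, layerwise uniform convergence of the partial realizations using uniform continuity of $\varrho$ on the compact sets trapping the pre-activations, and then the continuous embedding $C(\Omega)\hookrightarrow L^p_\mu(\Omega)$ via $\int_\Omega|f-g|^p\,d\mu\le\mu(\Omega)\,\|f-g\|_\infty^p$ (which handles $0<p<1$ as well, since $L^p_\mu$ is still a metric space there) --- is essentially the standard argument given in that reference, and all the steps hold up.
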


Under the assumptions of Proposition~\ref{prop:bdWeights} let 
us consider the compact set $\mathrm{R}_{\varrho}^{\Omega}(\Theta_C) \subset L^p(\mu)$ for $p \in (1,\infty).$
Then, from Lemma~\ref{compact}, the set $\langle \mathrm{R}_{\varrho}^{\Omega}(\Theta_C) \rangle$ is a radial dictionary in 
$L^p(\mu)$ for $p \in (1,\infty).$
Recall that
$$
\langle\mathrm{R}_{\varrho}^{\Omega}(\Theta_C) \rangle = \{\lambda  \Realization^{\Omega}_{\varrho} (\Phi) : \lambda \in \mathbb{R} \text{ and } \Phi \in \Theta_C \}.
$$
Since $\lambda  \Realization^{\Omega}_{\varrho} (\Phi) (x_0) = \lambda x_L = \lambda A_{L} \, x_{L-1} + \lambda  b_{L},$ we obtain
that $\lambda  \Realization^{\Omega}_{\varrho} (\Phi) = \Realization^{\Omega}_{\varrho} (\Phi^*)$ with
$\Phi^* =  \big( (A_\ell^*,b_\ell^*) \big)_{\ell=1}^L$ where $A_\ell^* = A_\ell,$  $b_\ell^* = b_\ell$ for $1 \le \ell \le L-1$
and $A_L^* = \lambda A_L,$  $b_L^* = \lambda b_L.$  In consequence, the radial dictionary $\langle\mathrm{R}_{\varrho}^{\Omega}(\Theta_C) \rangle \subset \Realization^{\Omega}_{\varrho}(\cN(S))$ is a cone included in the set of realizations of neural networks with architecture $S.$
Thus, $\langle \mathrm{R}_{\varrho}^{\Omega}(\Theta_C) \rangle$ is a universal dictionary of the reflexive Banach space $\overline{\mathrm{span}\, \langle\mathrm{R}_{\varrho}^{\Omega}(\Theta_C) \rangle}^{\|\cdot\|_{L^p_{\mu}}} \subset L^p_{\mu}(\Omega)$
for $p \in (1,\infty).$

\subsection{$\mathcal E$-dictionary optimization over $\Sc$}
Now, to complete (S2) we need to introduce the following definition. 

\begin{defn}[Dictionary optimization]
Let $\X$ be a reflexive Banach space and $\Sc$ be radial dictionary of $\X.$
Assume that $\mathcal E:\X \longrightarrow \mathbb{R}$ satisfies (A)-(B). A  $\mathcal E$-dictionary optimization over $\Sc$ is 
a multivalued map defined as follows: $$\boldsymbol{\nabla}_{\cdot}(\mathcal E;\Sc):\X \rightrightarrows \Sc, \quad u \mapsto \boldsymbol{\nabla}_u(\mathcal E;\Sc):= \arg \min_{z  \in u+\Sc}\mathcal E(z).$$ 
\end{defn}

\begin{remark}
	Observe that for radial dictionaries in reflexive Banach spaces, 
$$\boldsymbol{\nabla}_u(\mathcal E;\Sc):= \arg \min_{v \in \Sc}\mathcal E(u+v).$$
	
\end{remark}

The next result gives us some interesting properties of this multivalued map.

\begin{theorem}\label{lem:carac_sol_S1}
Let $\X$ be a reflexive Banach space and $\Sc$ be radial dictionary of $\X.$
Assume that $\mathcal E:\X \longrightarrow \mathbb{R}$ satisfies (A)-(B). Then, for all $u \in \X$, the following statements hold.
\begin{enumerate}
\item[\textbf{(a)}] The set $\boldsymbol{\nabla}_u(\mathcal E;\Sc)\neq \emptyset$ and it is weakly closed in $\X.$
\item[\textbf{(b)}] If $\Sc$ is an universal dictionary in $\X$ and $u^* \in \X$ satisfies $0 \in \boldsymbol{\nabla}_{u^*}(\mathcal E;\Sc)$, then $u^*$ solves \eqref{main}.
\end{enumerate}
\end{theorem}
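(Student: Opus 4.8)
The plan is to treat part (a) as an existence result for a constrained minimization problem in the spirit of the direct method, and part (b) as a first-order optimality argument exploiting the cone structure of $\Sc$. For part (a), I would introduce the translated functional $g_u(v) := \mathcal E(u+v)$ and observe that $\boldsymbol{\nabla}_u(\mathcal E;\Sc)$ is exactly the solution set of $\inf_{v \in \Sc} g_u(v)$. To invoke Theorem~\ref{th:closed_coercive} on the weakly closed set $\Sc$ (weakly closed by Theorem~\ref{cone_char}), I would verify its two hypotheses for $g_u$. Weak sequential lower semicontinuity transfers from $\mathcal E$: if $v_m \rightharpoonup v$ then $u + v_m \rightharpoonup u + v$, and since $\mathcal E$ is convex by Lemma~\ref{lemma_assumption}(b) and continuous, it is weakly sequentially lower semicontinuous by Proposition~\ref{Zeid}, giving $g_u(v) \le \liminf_m g_u(v_m)$. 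Coercivity transfers from Lemma~\ref{lemma_assumption}(c): since $\|u+v\| \ge \|v\| - \|u\|$, we get $g_u(v) \to +\infty$ as $\|v\| \to \infty$. Theorem~\ref{th:closed_coercive} then produces a minimizer, so $\boldsymbol{\nabla}_u(\mathcal E;\Sc) \neq \emptyset$.

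For the weak closedness in (a), I would set $m^* := \min_{v \in \Sc} g_u(v)$ and note that $\boldsymbol{\nabla}_u(\mathcal E;\Sc) = \Sc \cap \{v \in \X : g_u(v) \le m^*\}$, since $g_u \ge m^*$ on $\Sc$. The sublevel set is weakly sequentially closed precisely because $g_u$ is weakly sequentially lower semicontinuous: a weak limit of points satisfying $g_u \le m^*$ again satisfies $g_u \le m^*$. As $\Sc$ is weakly closed, the intersection of the two weakly closed sets is weakly closed, which is the claim.

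For part (b), the hypothesis $0 \in \boldsymbol{\nabla}_{u^*}(\mathcal E;\Sc)$ means $\mathcal E(u^*) \le \mathcal E(u^* + v)$ for every $v \in \Sc$. Fixing $v \in \Sc$ and using that $\Sc$ is a cone, so $tv \in \Sc$ for all $t \in \R$, the scalar function $\phi(t) := \mathcal E(u^* + tv)$ has a global minimum at $t = 0$. Since $\mathcal E$ is Fréchet differentiable, $\phi$ is differentiable with $\phi'(0) = \langle \mathcal E'(u^*), v \rangle$, and minimality forces $\phi'(0) = 0$. Hence $\langle \mathcal E'(u^*), v \rangle = 0$ for all $v \in \Sc$, and by linearity for all $v \in \mathrm{span}\,\Sc$. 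Because $\Sc$ is a universal dictionary, $\mathrm{span}\,\Sc$ is dense in $\X$, so continuity of the functional $\mathcal E'(u^*) \in \X^*$ upgrades this to $\langle \mathcal E'(u^*), y \rangle = 0$ for all $y \in \X$. The variational characterization in Theorem~\ref{th:solution} then identifies $u^*$ as the unique solution of \eqref{main}.

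The transfer of lower semicontinuity and coercivity and the final density/continuity extension are routine. The step that genuinely uses the geometry is the vanishing of the directional derivative in (b): it is essential that $\Sc$ is a cone rather than merely convex, so that both $tv$ and $-tv$ are admissible and the one-sided minimality inequality upgrades to stationarity $\langle \mathcal E'(u^*), v \rangle = 0$. I expect this to be the conceptual crux, even though it is technically short.
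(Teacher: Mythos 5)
Your proof is correct, and both part (b) and the non-emptiness claim in part (a) follow essentially the same route as the paper: translate to the weakly closed set $u+\Sc$ (equivalently, minimize $g_u$ over the weakly closed cone $\Sc$), invoke Theorem~\ref{th:closed_coercive} via Proposition~\ref{Zeid} and Lemma~\ref{lemma_assumption}(c), and in (b) exploit the cone structure so that the one-sided inequality $\langle \mathcal E'(u^*),z\rangle \ge 0$ upgrades to $\langle \mathcal E'(u^*),z\rangle = 0$ on $\Sc$, then extend by linearity, density and continuity before applying Theorem~\ref{th:solution}. Where you genuinely diverge is the weak closedness of $\boldsymbol{\nabla}_u(\mathcal E;\Sc)$: the paper writes this set as $\mathcal{E}_u^{-1}(\{a\})\cap \Sc$, observes it is norm-closed, and then extracts a convergent subsequence from a weakly convergent (hence bounded) sequence --- as written, that step only yields norm closedness, since bounded sequences in a reflexive space need not have strongly convergent subsequences and norm-closed sets need not be weakly closed. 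Your alternative identifies $\boldsymbol{\nabla}_u(\mathcal E;\Sc)$ with $\Sc\cap\{v\in\X : g_u(v)\le m^*\}$ and uses that sublevel sets of a weakly sequentially lower semicontinuous functional are weakly sequentially closed; intersecting with the weakly closed $\Sc$ gives the claim directly. This is cleaner, uses only machinery already established for the existence part, and closes the gap in the paper's version of that step.
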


\begin{proof}

\textbf{(a)} We claim that $u + \Sc$ is also weakly closed.
To see this assume that $u + y_n \rightharpoonup y$ for some $\{y_n\}_{n \ge 1} \subset \Sc,$ then
$y_n \rightharpoonup y-u$ and since
$\Sc$ is weakly closed, $y-u \in \Sc.$ In
consequence $y \in u+ \mathcal D$ and the claim follows.

Next, we will show that $\boldsymbol{\nabla}_u(\mathcal E;\Sc)\neq \emptyset$
for all $u \in \X.$ Applying the Fr\'echet differentiability of $\mathcal E$, we know that $\mathcal E$ is continuous. Since $\mathcal E$ is 
also convex, $\mathcal E$ is weakly sequentially lower semi-continuous by Proposition~\ref{Zeid}. Moreover, $\mathcal E$ is
coercive on $\X$ by Lemma~\ref{lemma_assumption}(c). By	the claim $u + \Sc$ is a weakly
closed subset in $\X.$ Then,  the existence of a minimizer follows from Theorem~\ref{th:closed_coercive},
that is, $\boldsymbol{\nabla}_u(\mathcal E;\Sc)\neq \emptyset.$

Now, we show that $\boldsymbol{\nabla}_u(\mathcal E;\Sc)$ is weakly closed in $\X$. For that, consider 
the map $$\mathcal{E}_u:\X \longrightarrow \mathbb{R}$$ given by
$\mathcal{E}_u(x) = \mathcal E(x+u).$ Let $a = \min_{\xi \in \Sc} \mathcal{E}_u(\xi) \in \mathbb{R}$. Then, $\mathcal{E}_u^{-1}(\{a\})$ is a closed set in $\X,$ thus
$\boldsymbol{\nabla}_u(\Sc;\mathcal{E}) = \mathcal{E}_u^{-1}(\{a\}) \cap \Sc$ is also closed in $\X.$
Consider a sequence $\{u_n\} \subset \boldsymbol{\nabla}_u(\Sc;\mathcal{E})$ such that
$u_n \rightharpoonup u$. Then $\{u_n\}$ is a bounded sequence in $\X$ that is a reflexive Banach space. Hence,  there exists a subsequence, also denoted by $\{u_n\}$, convergent to
	$z \in \boldsymbol{\nabla}_u(\Sc;\mathcal{E}).$ By
	the unicity of limits, $z=u.$ Thus, the set $\boldsymbol{\nabla}_u(\mathcal E;\Sc)$ 
	is weakly closed in $\X$. This ends the proof of (a).

\textbf{(b)}  Finally, to prove the last part of the theorem, assume
that $u^* \in \X$ satisfies $0 \in \boldsymbol{\nabla}_{u^*}(\mathcal E;\Sc).$ Then
for all $\gamma\in\Rbb_+$ and $z\in\Sc$, it holds
$\mathcal E(u^* + \gamma z) \ge \mathcal E(u^*)$ and therefore
$$
\langle \mathcal E'(u^*),z\rangle  = \lim_{\gamma\searrow
	0} \frac{1}{\gamma} ( \mathcal E(u^* + \gamma z) -
 \mathcal E(u^*))\ge 0
$$
holds for all $z\in\Sc.$ Since $\Sc$ is a cone, we have $\langle \mathcal E'(u^*),z\rangle  \; = \; 0$ for all
$z\in\Sc.$ Finally, by using the universality of $\Sc,$ we obtain
$
\langle \mathcal E'(u^*),v\rangle  = 0,$ for all $v\in \X,$
and (b) follows from Theorem \ref{th:solution}.  
\end{proof}


\section{Greedy Sequences by dictionary optimization}\label{sec3}

Let $\X$ be a reflexive Banach space and $\Sc$ be a universal dictionary of $\X.$ 
Assume that $\mathcal E:\X \longrightarrow \mathbb{R}$ satisfies (A)-(B) and $u^*\in \X$ 
solves \eqref{main}. Given $u \in \X,$ from the above theorem, 
if $0\in \boldsymbol{\nabla}_{u}(\mathcal E;\Sc)$
then $u=u^*,$ roughly speaking $\mathcal E'(u^*) = 0.$ Otherwise, $\mathcal E(v) < \mathcal E(u)$ holds for 
all $v \in u+\boldsymbol{\nabla}_u(\mathcal E;\Sc).$ Intuitively, the set
$\boldsymbol{\nabla}_{u}(\mathcal E;\Sc)$ contains feasible optimal corrections to 
reduce the value of $\mathcal E(u+\cdot)$ over the dictionary $\Sc.$
This discussion allows us to propose 
an iterative multivalued procedure that motivates the following definition.

\begin{defn}[Greedy Sequence by Dictionary Optimization]
	\label{def:updated_topological_pgd}
	We say that a greedy sequence of $u^*$ 
	by a $\mathcal E$-dictionary optimization over $\Sc$ is any sequence $\{u_m\}_{m \ge 1} \subset \X$ 
	constructed  by the following greedy algorithm:
\begin{enumerate}
\item $u_0 = 0$; 
\item for $m\geq 1$, $u_m  \in u_{m-1} + \boldsymbol{\nabla}_{u_{m-1}}(\mathcal E;\Sc).$
\end{enumerate}
\end{defn}

Observe that in a practical implementation, $u_m = u_{m-1}+\widetilde{\varepsilon}_m$ for 
some $\widetilde{\varepsilon}_m  \in \boldsymbol{\nabla}_{u_{m-1}}(\mathcal E;\Sc),$ where
\begin{equation} 
d_m:= \mathcal E(u_{m-1}) - \mathcal{E}(u_{m-1}+\widetilde{\varepsilon}_m) \ge 0
\end{equation}
is an optimal constant value over $\Sc.$ 
The quantity $\widetilde{\varepsilon}_m =  u_m - u_{m-1} \in\boldsymbol{\nabla}_{u_{m-1}}(\mathcal E;\Sc)$ 
can be see as a conditional residual that belongs to $\Sc,$ and depends on $ u_{m-1}.$
By definition, the first residual $\widetilde{\varepsilon}_1  = u_1 \in \boldsymbol{\nabla}_{0}(\mathcal E;\Sc)$ is a minimum of $\mathcal E$ over $\Sc,$ where in absence of any a priori information we may assume that $u_{0} =0.$
Moreover, for all $m \ge 1,$ we have $u_m = u_{m-1} + \widetilde{\varepsilon}_m = \sum_{\ell =1}^m  \widetilde{\varepsilon}_{\ell}$ where $\widetilde{\varepsilon}_{\ell} \in \boldsymbol{\nabla}_{u_{\ell-1}}(\mathcal E;\Sc)$ for $\ell \ge 1$ (see Figure~\ref{fig1}). In consequence, the sequence $\{u_m\}_{m \ge 1}$  
can be interpreted as an algebraic stack that progressively stores conditional residues.
\begin{figure}[h]
	\includegraphics[scale=0.18]{./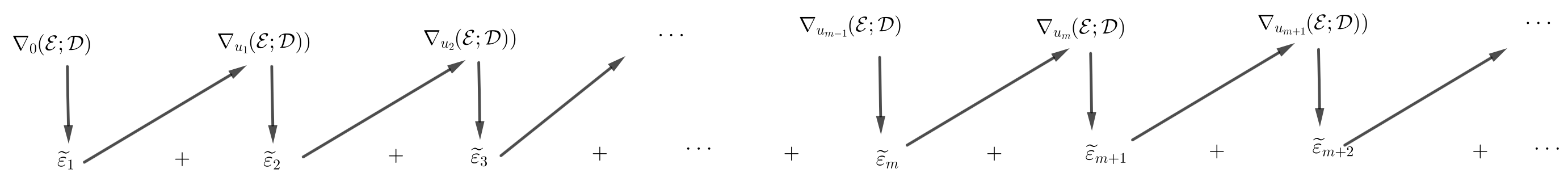}
	\caption{A scheme of the construction of a greedy sequence of $u^*$ by a $\mathcal E$-dictionary optimization over $\Sc.$}\label{fig1}
\end{figure}

A greedy sequence by a dictionary optimization can be see as an 
extension of the Method of Steepest Descend
(see \cite[Chapter XV]{Kantorovich}) because both methods share
assumptions (A)-(B) to assure its convergence. To compare, recall
that Steepest Descend is performed by means a sequence $\{u_m\}_{m \ge 1}$ constructed 
as follows:
\begin{enumerate}
	\item $u_0 = 0,$ and for $m \ge 1,$
	\item given $u_{m-1}$ compute:
	\begin{enumerate}
		\item[(2.1)] $w_m \in \arg \min_{z \in S_{\X}} \langle \mathcal E'(u_{m-1}),z\rangle;$
		\item[(2.2)] $\lambda_m \in \arg \min_{\lambda \in \R} \mathcal E(u_{m-1}+\lambda \, w_m);$
	\end{enumerate}
	put $u_m = u_{m-1}+\lambda_m \, w_m.$
\end{enumerate}
Step (2.1) appears as the key point of this procedure. It  implies $\langle \mathcal E(u_{m-1}),w_m\rangle = -\|\mathcal E'(u_{m-1})\|_*.$ Moreover, a bound of $O(m^{-1})$ for the rate of convergence of the sequence $\{\mathcal E(u_m) - \mathcal E(u^*)\}_{m \ge 1},$
where $u^*$ solves \eqref{main}, has been obtained (see Theorem 3 p. 466 in \cite{Kantorovich}).

We are going to prove two results:
\begin{itemize}
	\item The first one is to show that each greedy sequence $\{u_m\}_{m \ge 1}$ of $u^*$ 
	by a $\mathcal E$-dictionary optimization over $\Sc$ converges in $\X$ to $u^*,$ 
	where $u^*$ is the solution of \eqref{main} as is stated in the next result.
\end{itemize}
	\begin{theorem}\label{th:convergence}
		Let $\X$ be a reflexive Banach space and $\Sc$ be a universal dictionary of $\X.$
		Assume that $\mathcal E:\X \longrightarrow \mathbb{R}$
		satisfies (A)-(B) and $u^*\in \X$ solves \eqref{main}. Then for each greedy sequence $\{u_m\}_{m \ge 1}$ of $u^*$ by a $\mathcal E$-dictionary optimization over $\Sc$ 
		converges in $\X$ to $u^*,$ that is,
		$\lim_{m\rightarrow\infty}\Vert u^* -u_m\Vert
		= 0.$
	\end{theorem}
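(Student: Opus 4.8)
The plan is to reduce the theorem to the single energy statement $\lim_{m\to\infty}\mathcal E(u_m)=\mathcal E(u^*)$ and then to invoke ellipticity. Since $0\in\Sc$ we have $\mathcal E(u_m)\le\mathcal E(u_{m-1}+0)=\mathcal E(u_{m-1})$, so $\{\mathcal E(u_m)\}_{m\ge0}$ is non-increasing; by Lemma~\ref{lemma_assumption}(c) it is bounded below, hence converges to some $L\ge\mathcal E(u^*)$. Writing $d_m=\mathcal E(u_{m-1})-\mathcal E(u_m)\ge0$, the telescoping sum $\sum_{m\ge1}d_m=\mathcal E(0)-L$ is finite, so $d_m\to0$. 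Applying \eqref{eq:J_strong_convexity_consequence} with $x=u_m$, $y=u^*$ together with the Euler equation $\langle\mathcal E'(u^*),\cdot\rangle=0$ of Theorem~\ref{th:solution} gives $\tfrac{\alpha}{s}\|u_m-u^*\|^s\le\mathcal E(u_m)-\mathcal E(u^*)$, so it suffices to prove $L=\mathcal E(u^*)$, and this will simultaneously force $\|u_m-u^*\|\to0$.

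Next I would extract compactness and first–order information. All iterates lie in the sublevel set $\{x:\mathcal E(x)\le\mathcal E(0)\}$, which is bounded by coercivity (Lemma~\ref{lemma_assumption}(c)); reflexivity then yields a subsequence $u_{m_k}\rightharpoonup\bar u$. Because the correction $\widetilde{\varepsilon}_m=u_m-u_{m-1}$ minimizes $\lambda\mapsto\mathcal E(u_{m-1}+\lambda\widetilde{\varepsilon}_m)$ on the line $\R\widetilde{\varepsilon}_m\subset\Sc$ (a cone), stationarity gives $\langle\mathcal E'(u_m),\widetilde{\varepsilon}_m\rangle=0$; feeding this into \eqref{eq:J_strong_convexity_consequence} with $x=u_{m-1}$, $y=u_m$ yields $d_m\ge\tfrac{\alpha}{s}\|\widetilde{\varepsilon}_m\|^s$, so $\|\widetilde{\varepsilon}_m\|\to0$. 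Using assumption (A) (a uniform Lipschitz constant $M$ for $\mathcal E'$ on the bounded region containing the iterates) together with the minimality $\mathcal E(u_m)\le\mathcal E(u_{m-1}+\lambda g)$ for all $g\in\Sc\cap S_{\X}$, $\lambda\in\R$, the elementary descent estimate gives $d_m\ge\tfrac{1}{2M}\sup_{g\in\Sc\cap S_{\X}}\langle\mathcal E'(u_{m-1}),g\rangle^2$. Hence $\sup_{g\in\Sc\cap S_{\X}}|\langle\mathcal E'(u_{m-1}),g\rangle|\to0$, so $\langle\mathcal E'(u_m),v\rangle\to0$ for every $v\in\spn\Sc$; since $\spn\Sc$ is dense and $\{\mathcal E'(u_m)\}$ is bounded in $\X^*$, this upgrades to $\mathcal E'(u_m)\rightharpoonup0$.

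To finish I would identify $\bar u$ with $u^*$. The target is $0\in\boldsymbol{\nabla}_{\bar u}(\mathcal E;\Sc)$, i.e. $\langle\mathcal E'(\bar u),g\rangle=0$ for all $g\in\Sc\cap S_{\X}$, whence Theorem~\ref{lem:carac_sol_S1}(b) (universality of $\Sc$) gives $\bar u=u^*$ and Proposition~\ref{Zeid} then yields $\mathcal E(\bar u)\le\liminf_k\mathcal E(u_{m_k})=L=\mathcal E(u^*)$. The clean mechanism is the following: \emph{once} $\mathcal E(\bar u)=L$ is known, strong convergence is automatic, because the midpoint form of \eqref{eq:J_strong_convexity_consequence}, namely $\mathcal E(\tfrac{u_{m_k}+\bar u}{2})\le\tfrac12\mathcal E(u_{m_k})+\tfrac12\mathcal E(\bar u)-c\,\|u_{m_k}-\bar u\|^s$ with $c=\tfrac{\alpha}{s\,2^{s}}$, combined with weak lower semicontinuity at $\tfrac{u_{m_k}+\bar u}{2}\rightharpoonup\bar u$, forces $\|u_{m_k}-\bar u\|\to0$; then continuity of $\mathcal E'$ gives $\langle\mathcal E'(\bar u),g\rangle=\lim_k\langle\mathcal E'(u_{m_k}),g\rangle=0$. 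As every weak subsequential limit is then $u^*$ and $\mathcal E(u_m)\to\mathcal E(u^*)$, the reduction of the first paragraph delivers $\|u_m-u^*\|\to0$ for the whole sequence.

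The step I expect to be the main obstacle is precisely establishing $\mathcal E(\bar u)=L$ (equivalently the Minty condition $\limsup_k\langle\mathcal E'(u_{m_k}),u_{m_k}-\bar u\rangle\le0$). Convexity only supplies $\langle\mathcal E'(u_m),u_m-\bar u\rangle\le\mathcal E(u_m)-\mathcal E(\bar u)$ and Proposition~\ref{Zeid} only supplies $\mathcal E(\bar u)\le L$, so the reverse energy inequality — a form of weak upper semicontinuity along the greedy sequence — does not follow from (A)–(B) alone and must be wrung out of the optimality $\mathcal E(u_m)\le\mathcal E(u_{m-1}+g)$ for all $g\in\Sc$. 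This is exactly where the dictionary–restricted quantity $\sup_{g\in\Sc\cap S_{\X}}|\langle\mathcal E'(u_m),g\rangle|$ has to be tied to the true gap $\mathcal E(u_m)-\mathcal E(u^*)$. In the Method of Steepest Descent the full dual norm $\|\mathcal E'(u_m)\|_*$ appears, and the ellipticity bound $\|\mathcal E'(u_m)\|_*\ge(\tfrac{\alpha}{s})^{1/s}(\mathcal E(u_m)-\mathcal E(u^*))^{(s-1)/s}$ closes the argument immediately; here only the weaker seminorm $\sup_{g\in\Sc\cap S_{\X}}|\langle\mathcal E'(u_m),g\rangle|$ is controlled, and one must verify that its vanishing still propagates to the energy. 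I would therefore concentrate the effort on this link, either through the maximal monotonicity of $\mathcal E'$ (to run the Minty argument and control the crossed term $\langle\mathcal E'(u_{m_k}),u_{m_k}\rangle$) or through a direct comparison of $u_m$ with finite dictionary expansions of $u^*$ furnished by the density of $\spn\Sc$.
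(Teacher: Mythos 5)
Your proposal follows the paper's skeleton faithfully up to the decisive step, and then stops exactly where the real work begins. Like the paper, you observe that $\{\mathcal E(u_m)\}$ is non-increasing and bounded below, reduce everything to showing $L=\mathcal E(u^*)$, and note that ellipticity (inequality \eqref{eq:J_strong_convexity_consequence} with $y=u^*$ and the Euler equation) then upgrades energy convergence to norm convergence — this is precisely Proposition~\ref{lem:J_decrease}(a)--(b) and the first half of the paper's argument. Your derivation of $\langle \mathcal E'(u_{m}),\widetilde{\varepsilon}_{m}\rangle=0$, of $\sum_m\|\widetilde{\varepsilon}_m\|^s<\infty$, and of $\sup_{g\in\Sc\cap S_{\X}}|\langle\mathcal E'(u_{m}),g\rangle|\to 0$ (hence $\langle\mathcal E'(u_m),v\rangle\to 0$ for all $v\in\X$ by density and boundedness) reproduces, by a slightly different but legitimate descent estimate, the content of Lemma~\ref{NEW}(a)--(b) and Proposition~\ref{lem:J_decrease}(d).

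The genuine gap is the control of the crossed term $\langle\mathcal E'(u_{m_k}),u_{m_k}\rangle$, which you correctly diagnose as the main obstacle but do not resolve. Weak-$*$ vanishing of $\mathcal E'(u_m)$ paired with a merely bounded (or weakly convergent) sequence $u_{m_k}$ gives nothing, and the naive bound $|\langle\mathcal E'(u_{m_k}),u_{m_k}\rangle|\le \sup_{g}|\langle\mathcal E'(u_{m_k}),g\rangle|\sum_{\ell}\|\widetilde{\varepsilon}_\ell\|$ fails because only $\sum_\ell\|\widetilde{\varepsilon}_\ell\|^s$ with $s>1$ is known to converge. Neither of your two suggested escape routes closes this: the Minty/monotonicity route is circular here, since by (B) one has $\langle\mathcal E'(u_{m_k}),u_{m_k}-\bar u\rangle=\langle\mathcal E'(\bar u),u_{m_k}-\bar u\rangle+\alpha\|u_{m_k}-\bar u\|^s$, so the Minty condition $\limsup_k\langle\mathcal E'(u_{m_k}),u_{m_k}-\bar u\rangle\le 0$ is \emph{equivalent} to the strong convergence you are trying to prove. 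The paper's resolution is Proposition~\ref{lem:J_decrease}(e), whose proof rests on the sharper estimate $|\langle\mathcal E'(u_{m-1}),z\rangle|\le C\|u_m-u_{m-1}\|^2\|z\|$ of Lemma~\ref{NEW}(c) (note the \emph{square}, obtained from local Lipschitz continuity of $\mathcal E'$ together with the optimality $\mathcal E(u_m)\le\mathcal E(u_{m-1}+z)$ for all $z\in\Sc$ and the stationarity of part (a)), combined with the telescoping expansion $u_{m_k}=\sum_{\ell=1}^{k}(u_{m_\ell}-u_{m_{\ell-1}})$, H\"older's inequality with exponents $s,s^*$, and the classical fact that a monotone summable sequence satisfies $m_k\|u_{m_k}-u_{m_{k-1}}\|^s\to 0$ along a suitable subsequence. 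Without an argument of this kind — exploiting that the quadratic decay in $\|u_m-u_{m-1}\|$ beats the possibly divergent sum $\sum_\ell\|\widetilde{\varepsilon}_\ell\|$ — the identification $L=\mathcal E(u^*)$ is not established and the proof is incomplete.
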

\begin{itemize}
	\item The second one shows that a greedy sequence by dictionary optimization
	has the same rate of convergence as the well-known Method of the Steepest
	Descend implemented in a reflexive Banach space.
\end{itemize}
\begin{theorem}\label{th:rate_convergence}
		Let $\X$ be a reflexive Banach space and $\Sc$ be a universal dictionary of $\X.$
		Assume that $\mathcal E:\X \longrightarrow \mathbb{R}$
		satisfies (A)-(B) and $u^*\in \X$ solves \eqref{main}. Then each greedy sequence $\{u_m\}_{m \ge 1}$ of $u^*$ by a $\mathcal E$-dictionary optimization over $\Sc$ 
		satisfies
		$$
		\mathcal E(u_m)-\mathcal E(u^*) = O(m^{-1}).
		$$
	\end{theorem}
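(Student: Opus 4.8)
The plan is to reproduce, in the dictionary setting, the classical steepest-descent estimate: derive a one-step decrease of the form $a_m \le a_{m-1}-c\,a_{m-1}^{2}$ for $a_m:=\mathcal E(u_m)-\mathcal E(u^*)\ge 0$, and then solve this recursion. First I would record that $\{a_m\}$ is non-increasing: since $\Sc$ is a cone we have $0\in\Sc$, so $\mathcal E(u_m)=\min_{z\in\Sc}\mathcal E(u_{m-1}+z)\le\mathcal E(u_{m-1})$. By Theorem~\ref{th:convergence} the sequence converges, hence is bounded; set $R:=\sup_{m}\|u_m-u^*\|<\infty$ and fix a bounded convex neighbourhood of the iterates on which, by assumption (A), $\mathcal E'$ is Lipschitz with some constant $L$.

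Next I would establish the one-step decrease. For $w\in\Sc$ with $\|w\|=1$ and $t\in\R$, integrating the $L$-Lipschitz bound for $\mathcal E'$ along the segment $[u_{m-1},u_{m-1}+tw]$ gives the quadratic upper bound
\begin{equation*}
\mathcal E(u_{m-1}+tw)\le\mathcal E(u_{m-1})+t\,\langle\mathcal E'(u_{m-1}),w\rangle+\tfrac{L}{2}\,t^{2}.
\end{equation*}
Because $\Sc$ is a cone, $tw\in\Sc$, and since $u_m$ minimises $\mathcal E$ over $u_{m-1}+\Sc$, optimising the right-hand side in $t$ yields
\begin{equation*}
a_{m-1}-a_m=\mathcal E(u_{m-1})-\mathcal E(u_m)\ge\frac{\langle\mathcal E'(u_{m-1}),w\rangle^{2}}{2L}.
\end{equation*}
Taking the supremum over $w\in\Sc\cap S_{\X}$ gives $a_{m-1}-a_m\ge\sigma_{m-1}^{2}/(2L)$, where $\sigma_{m-1}:=\sup_{w\in\Sc\cap S_{\X}}|\langle\mathcal E'(u_{m-1}),w\rangle|$ is the steepest slope of $\mathcal E'(u_{m-1})$ attainable along the dictionary. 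This is the exact analogue of steps (2.1)--(2.2) of the Method of Steepest Descend, with the unit sphere $S_{\X}$ replaced by $\Sc\cap S_{\X}$.

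It then remains to bound $\sigma_{m-1}$ from below by a multiple of $a_{m-1}$. The convexity consequence \eqref{eq:J_strong_convexity_consequence1}, applied with $x=u^*$ and $y=u_{m-1}$ and using $\mathcal E'(u^*)=0$, gives $a_{m-1}\le\langle\mathcal E'(u_{m-1}),u_{m-1}-u^*\rangle$. The goal is to dominate this pairing by $\sigma_{m-1}$ times a quantity that stays bounded along the iteration, so as to obtain $\sigma_{m-1}\ge a_{m-1}/C$ for a fixed $C>0$; here one exploits that $\|u_{m-1}-u^*\|\le R$ together with the universality of $\Sc$ (density of $\mathrm{span}\,\Sc$), which is what links the displacement $u_{m-1}-u^*$ to the admissible dictionary directions. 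Granting this, the previous paragraph yields the quadratic recursion $a_m\le a_{m-1}-c\,a_{m-1}^{2}$ with $c=1/(2LC^{2})$.

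Finally, from $a_m\le a_{m-1}-c\,a_{m-1}^{2}=a_{m-1}(1-c\,a_{m-1})$ one uses $1/(1-x)\ge 1+x$ to get $a_m^{-1}\ge a_{m-1}^{-1}+c$ for $m$ large; telescoping gives $a_m^{-1}\ge cm+O(1)$, i.e. $\mathcal E(u_m)-\mathcal E(u^*)=O(m^{-1})$, as claimed. I expect the decisive difficulty to be precisely the lower bound $\sigma_{m-1}\ge a_{m-1}/C$ of the third paragraph: unlike genuine steepest descent, the dictionary only furnishes the weaker directional functional $\sigma_{m-1}$, which in infinite dimensions can be strictly smaller than the dual norm $\|\mathcal E'(u_{m-1})\|_{*}$ that controls $\langle\mathcal E'(u_{m-1}),u_{m-1}-u^*\rangle$. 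Making the passage from $\|\cdot\|_{*}$ to this dictionary slope quantitative and uniform in $m$---so that the comparison constant $C$ does not degenerate as $u_m\to u^*$---is the crux, and it is here that universality of $\Sc$, the ellipticity (B), and the boundedness supplied by Theorem~\ref{th:convergence} must be combined.
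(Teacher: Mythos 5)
Your first two paragraphs are correct and are essentially the paper's descent step: the quadratic upper bound along cone directions is Lemma~\ref{rate1}, and optimizing the step length gives a per-iteration decrease controlled by a squared slope. The final paragraph's resolution of the recursion is also fine (the paper instead invokes Lemma~\ref{rate2}). But the proposal does not prove the theorem: everything hinges on the lower bound $\sigma_{m-1}\ge a_{m-1}/C$ with $C$ uniform in $m$, which you state as a goal, correctly identify as the crux, and never establish. As written this is a plan with a hole at the decisive point.

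The paper closes that hole by a route you do not find. Proposition~\ref{rate3} proves the gradient-dominance inequality $\mathcal E(u)-\mathcal E(u^*)\le c\,\|\mathcal E'(u)\|_*$ for all $u$ in the sublevel set $\Omega_0=\mathcal E^{-1}([\mathcal E(u^*),\mathcal E(0)])$, with $c$ depending only on $\Omega_0$. The argument is short and uses no dictionary at all: $\Omega_0$ is bounded by coercivity, so $\Omega_0-\Omega_0\subset B_{\X,c}$ for some $c>0$; convexity in the form \eqref{eq:J_strong_convexity_consequence0} gives $\min_{z\in B_{\X,c}}\mathcal E(u+z)-\mathcal E(u)\ge\min_{z\in B_{\X,c}}\langle\mathcal E'(u),z\rangle=-c\,\|\mathcal E'(u)\|_*$, and since $u^*\in u+B_{\X,c}$ the left-hand side equals $\mathcal E(u^*)-\mathcal E(u)$. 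Monotonicity of $\{\mathcal E(u_m)\}$ keeps the iterates in $\Omega_0$, whence $\lambda_m\le c\,\|\mathcal E'(u_m)\|_*$ (Corollary~\ref{rate4}). Note, however, that this estimate is in the full dual norm, not in your dictionary slope $\sigma_{m-1}$; correspondingly the paper states its one-step decrease as $\lambda_m-\lambda_{m+1}\ge\frac{1}{2\mathcal L}\|\mathcal E'(u_m)\|_*^2$, which it obtains by inserting the realized step direction $w_{m+1}$ with length $-\frac{1}{\mathcal L}\|\mathcal E'(u_m)\|_*$ into Lemma~\ref{rate1} and replacing $\langle\mathcal E'(u_m),w_{m+1}\rangle$ by $\|\mathcal E'(u_m)\|_*$ --- a substitution that, for a negative step length, presumes $\langle\mathcal E'(u_m),w_{m+1}\rangle\ge\|\mathcal E'(u_m)\|_*$, i.e.\ that the dictionary direction attained is a steepest one. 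So the obstruction you isolate (dictionary slope versus dual norm) is genuine, and it is exactly where the paper's own descent estimate is thinnest; what you are missing and should import is the sublevel-set inequality of Proposition~\ref{rate3}, which supplies the uniform constant your third paragraph needs once the decrease is expressed in $\|\mathcal E'(u_m)\|_*$.
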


To prove Theorem \ref{th:convergence}, the next proposition is needed. It gives us some useful
 properties of the sequences
$\{u_m\}_{m \in \N},$  $\{\mathcal E({u}_m)\}_{m\ge
		1}$  and  $\{\mathcal E'({u}_m)\}_{m\ge
		1}.$

\begin{prop}\label{lem:J_decrease}
	Let $\X$ be a reflexive Banach space and $\Sc$ be a universal dictionary of $\X.$
	Assume that $\mathcal E:\X \longrightarrow \mathbb{R}$
	satisfies (A)-(B) and $u^*\in \X$ solves \eqref{main}. Then for each greedy sequence $\{u_m\}_{m \ge 1}$ of $u^*$ by a $\mathcal E$-dictionary optimization over $\Sc$ the
	following statements hold.
	\begin{enumerate}
	\item[\textbf{(a)}] The sequence $\{\mathcal E({u}_m)\}_{m\ge
		1},$ is non increasing and bounded below, that is, $$\mathcal E(u^*) \le \mathcal E({u}_{m})\le \mathcal E({u}_{m-1}),$$ 
		holds for all $m \ge 1.$
	\item[\textbf{(b)}] If $\lim_{m\to\infty}\mathcal E(u_m) = \mathcal E(u^*),$ then $\lim_{m\to\infty}\Vert u^* -u_m\Vert = 0.$
	\item[\textbf{(c)}] If for some $m \ge 1$  it holds $u_{m} = u_{m-1}$ then $u_{m-1} = u^*.$ 
\item[\textbf{(d)}] The sequence
$\{\mathcal E'(u_m)\}_{m\in\mathbb{N}}$ satisfies $
\lim_{m\rightarrow \infty}\langle \mathcal E'(u_{m}), z
\rangle = 0, $ for all $z$ in $\X.$
\item[\textbf{(e)}] There exists a 
subsequence $\{u_{m_k}\}_{k\in \mathbb{N}}$
such that
$\lim_{k \rightarrow \infty}\langle \mathcal E'(u_{m_k}),u_{m_k} \rangle \rightarrow 0.$
\end{enumerate}
\end{prop}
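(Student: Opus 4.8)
The plan is to dispatch (a)--(c) quickly as consequences of convexity and the already-established characterizations, and to concentrate the real work on the quantitative statements (d) and (e). Throughout write $g_m:=\mathcal E'(u_m)$ and $d_m:=\mathcal E(u_{m-1})-\mathcal E(u_m)\ge0$.

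For (a), since $\Sc$ is a cone we have $0\in\Sc$, so taking $v=0$ in the definition of $\boldsymbol{\nabla}_{u_{m-1}}(\mathcal E;\Sc)$ gives $\mathcal E(u_m)=\mathcal E(u_{m-1}+\widetilde{\varepsilon}_m)\le\mathcal E(u_{m-1})$, while $\mathcal E(u_m)\ge\min_{\X}\mathcal E=\mathcal E(u^*)$; hence the sequence is non-increasing and bounded below. For (b) I would use \eqref{eq:J_strong_convexity_consequence} with $x=u_m$, $y=u^*$ together with $\mathcal E'(u^*)=0$ (Theorem~\ref{th:solution} and \eqref{eq:euler}) to obtain $\mathcal E(u_m)-\mathcal E(u^*)\ge\frac{\alpha}{s}\|u_m-u^*\|^s$, so that $\mathcal E(u_m)\to\mathcal E(u^*)$ forces $\|u_m-u^*\|\to0$. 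For (c), $u_m=u_{m-1}$ means $0\in\boldsymbol{\nabla}_{u_{m-1}}(\mathcal E;\Sc)$, so Theorem~\ref{lem:carac_sol_S1}(b) (using universality of $\Sc$) identifies $u_{m-1}$ as a solution of \eqref{main}, whence $u_{m-1}=u^*$ by uniqueness.

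For (d), by (a) the values $\mathcal E(u_m)$ converge, so $\sum_{m}d_m=\mathcal E(0)-\lim_m\mathcal E(u_m)<\infty$; in particular $d_m\to0$. The iterates lie in the bounded set $\Omega_0=\mathcal E^{-1}([\mathcal E(u^*),\mathcal E(0)])$, on which (A) furnishes a Lipschitz constant $L$ for $\mathcal E'$, yielding both a uniform bound $\|g_m\|_*\le M$ and the smoothness inequality $\mathcal E(u_m+\gamma z)\le\mathcal E(u_m)+\gamma\langle g_m,z\rangle+\tfrac{L}{2}\gamma^2\|z\|^2$. Since $d_{m+1}\ge\mathcal E(u_m)-\mathcal E(u_m+\gamma z)$ for every $z\in\Sc$ and every $\gamma$, and since $\Sc$ is a cone (so the sign of $z$ may be chosen to make $\langle g_m,z\rangle\le0$), optimizing over $\gamma>0$ gives the central estimate $|\langle g_m,z\rangle|\le\sqrt{2L\,d_{m+1}}$ for all $z\in\Sc\cap S_{\X}$. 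As $d_{m+1}\to0$ this forces $\langle g_m,z\rangle\to0$ uniformly on $\Sc\cap S_{\X}$; the extension to arbitrary $z\in\X$ then follows from the density of $\mathrm{span}\,\Sc$ (universality) together with $\|g_m\|_*\le M$, by approximating $z$ with finite combinations of dictionary elements.

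For (e) the starting observation is that each correction is optimal along its own ray: since $\mathrm{span}\{\widetilde{\varepsilon}_m\}\subset\Sc$ and $\widetilde{\varepsilon}_m$ minimizes $\mathcal E(u_{m-1}+\cdot)$ on $\Sc$, differentiating $\lambda\mapsto\mathcal E(u_{m-1}+\lambda\widetilde{\varepsilon}_m)$ at $\lambda=1$ gives $\langle g_m,\widetilde{\varepsilon}_m\rangle=0$, whence $\langle g_m,u_m\rangle=\langle g_m,u_{m-1}\rangle=\sum_{\ell=1}^{m-1}\langle g_m,\widetilde{\varepsilon}_\ell\rangle$. Applying \eqref{eq:J_strong_convexity_consequence} with $x=u_{m-1}$, $y=u_m$ and the same orthogonality yields $\|\widetilde{\varepsilon}_m\|^s\le\frac{s}{\alpha}d_m$, so $\sum_m\|\widetilde{\varepsilon}_m\|^s<\infty$ and, by Hölder, $T_{m-1}:=\sum_{\ell=1}^{m-1}\|\widetilde{\varepsilon}_\ell\|\le C\,(m-1)^{1-1/s}$. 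Combining this with the estimate from (d) applied to each $\widetilde{\varepsilon}_\ell\in\Sc$ gives $|\langle g_m,u_m\rangle|^2\le 2L\,d_{m+1}\,T_{m-1}^2\le 2LC^2\,d_{m+1}\,(m-1)^{2-2/s}$. Because $s\in(1,2]$ forces the exponent $\beta:=2-2/s\le1$, and because $\sum_m d_m<\infty$ implies $\liminf_m d_{m+1}(m-1)^{\beta}=0$, I would extract a subsequence $\{m_k\}$ realizing this $\liminf$; along it $\langle g_{m_k},u_{m_k}\rangle\to0$, which is (e).

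The hard part is the passage from the local Lipschitz hypothesis (A) to a single Lipschitz (hence smoothness) constant $L$ valid on the whole bounded region $\Omega_0$ where the iterates live; in infinite dimensions boundedness does not yield compactness, so this uniformity must be argued with care (it is automatic for the $C^2$ model functionals of Section~\ref{sec1}). Granting it, the only other delicate point is the elementary numerical lemma that $\sum_m d_m<\infty$ and $\beta\le1$ imply $\liminf_m d_m m^{\beta}=0$, which supplies the subsequence in (e).
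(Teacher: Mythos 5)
Your proof is correct, and parts (a)--(c) coincide with the paper's. For (d) and (e), however, you take a genuinely different route. The paper isolates a technical lemma (Lemma~\ref{NEW}) whose key estimate is $\vert\langle \mathcal E'(u_{m-1}),z\rangle\vert \le C\Vert u_m-u_{m-1}\Vert^2\Vert z\Vert$ for $z\in\Sc$, proved by a somewhat involved manipulation of the local Lipschitz bound together with the orthogonality $\langle\mathcal E'(u_m),u_m-u_{m-1}\rangle=0$; it then proves (d) by a weak-$*$ compactness and subsequence-extraction argument in $\X^*$, and (e) via Knopp's theorem on convergent series with monotonically decreasing terms. You instead derive the quantitative bound $\vert\langle\mathcal E'(u_m),z\rangle\vert\le\sqrt{2L\,d_{m+1}}\,\Vert z\Vert$ for $z\in\Sc$ directly from the descent (smoothness) inequality --- essentially Lemma~\ref{rate1}, which the paper only deploys later for the rate theorem --- combined with the cone structure and the optimality of $u_{m+1}$; this gives (d) by a density/$\varepsilon$-argument with no subsequences, and (e) by Hölder plus the elementary fact that $\sum_m d_m<\infty$ and $\beta=2-2/s\le 1$ force $\liminf_m m^\beta d_m=0$. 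Your version is more transparent and avoids both the weak-$*$ machinery and the citation of Knopp (whose application in the paper's proof of (e) is itself delicate regarding the indices $m_k$ versus $k$); the price is a slightly weaker intermediate bound ($d_{m+1}^{1/2}$ in place of $\Vert u_{m+1}-u_m\Vert^2$), which is nonetheless amply sufficient. The one caveat you rightly flag --- upgrading the local Lipschitz hypothesis (A) to a single Lipschitz constant on the bounded set $\Omega_0$, which does not follow from local Lipschitzness in infinite dimensions --- is not a defect of your argument relative to the paper, since the paper makes exactly the same leap in the proofs of Lemma~\ref{NEW}(c), Proposition~\ref{lem:J_decrease}(d) and Theorem~\ref{th:rate_convergence}.
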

\begin{proof}
See Appendix~\ref{appendix:a1}.
\end{proof}

Now, we give the proof of the theorem.

\subsubsection*{Proof of Theorem~\ref{th:convergence}}
From Proposition~\ref{lem:J_decrease}(a), $\{\mathcal E(u_{m})\}$ is a non
increasing sequence. If there exists $m$ such that
$\mathcal E(u_{m})=\mathcal E(u_{m-1})$. From Proposition~\ref{lem:J_decrease}(c), 
we have $u_m=u^*$, which ends
the proof. Let us now suppose that
$\mathcal E(u_m)<\mathcal E(u_{m-1})$ for all $m$. Now, $\{\mathcal E(u_m)\}_{m \in \N}$
is a strictly decreasing sequence of real numbers which is bounded below by
$\mathcal E(u^*)$. Then, there exists
$$
\mathcal E^*=\lim_{m\to \infty} \mathcal E(u_m) \ge \mathcal E(u^*).
$$
If $\mathcal E^*=\mathcal E(u^*)$, Proposition~\ref{lem:J_decrease}(b) allows
to conclude that $\{u_m\}$ strongly converges to
$u^*$. Therefore, it remains to prove that
$\mathcal E^*=\mathcal E(u^*)$. As a consequence of \eqref{eq:J_strong_convexity_consequence1} we have
\begin{equation}\label{eq_theo}
\mathcal E(u_{m})-\mathcal E(u^*) \le \langle
\mathcal E'(u_{m}),u_{m}-u^*\rangle = \langle
\mathcal E'(u_{m}),u_{m}\rangle -    \langle
\mathcal E'(u_{m}), u^*\rangle
\end{equation}
By Proposition~\ref{lem:J_decrease}(e), we have that there exists a
subsequence $\{u_{m_k}\}_{k\in\mathbb{N}}$ such that $$\lim_{k \rightarrow \infty}\langle
\mathcal E'(u_{m_k}),u_{m_k}\rangle = 0$$ and, from Proposition~\ref{lem:J_decrease}(d), also
$\lim_{k \rightarrow \infty} \langle \mathcal E'(u_{m_k}), u^*\rangle = 0.$
Therefore, putting $\{u_{m_k}\}_{k\in\mathbb{N}}$ in \eqref{eq_theo} and
taking limits as $k\rightarrow \infty,$ we obtain
$$
\mathcal E^* - \mathcal E(u^*) = \lim_{k\to\infty } \mathcal E(u_{m_k}) -
\mathcal E(u^*) \le 0
$$
Since we already had $\mathcal E^*\ge \mathcal E(u^*)$, this yields
$\mathcal E^*=\mathcal E(u^*)$, which ends the proof. 
\qed

To prove now Theorem \ref{th:rate_convergence}, we need the following three results. The first
two lemmas are due to Akilov and  Kantorovich \cite{Kantorovich}.

\begin{lem}[{\cite[Lemma 2 p. 464]{Kantorovich}}]\label{rate1}
Let $\X$ be a reflexive Banach space and assume that $\mathcal E:\X \longrightarrow \mathbb{R}$
satisfies (A). Assume that $\mathcal{E}'$ satisfies the Lipschitz condition in 
$B_{\X,r+s}$ for some $r,s > 0$ with a Lipschitz constant equal to $\mathcal L.$ If $z \in B_{\X,s}$ then it holds
$$
\mathcal E(x+z) \le \mathcal E(z)+\langle\mathcal E'(x),z \rangle + \frac{\mathcal{L}}{2}\|z\|^2.
$$ 
\end{lem}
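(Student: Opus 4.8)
The plan is to reduce the statement to the one-dimensional fundamental theorem of calculus by restricting $\mathcal E$ to the line segment joining $x$ and $x+z$. Concretely, for $x \in B_{\X,r}$ and $z \in B_{\X,s}$, I would introduce the real-valued auxiliary function $g:[0,1]\to\R$ defined by $g(t) = \mathcal E(x+tz)$. Since $\mathcal E$ is Fr\'echet differentiable, the chain rule gives that $g$ is differentiable with $g'(t) = \langle \mathcal E'(x+tz), z\rangle$, and because $\mathcal E'$ satisfies the Lipschitz condition (hence is continuous) on $B_{\X,r+s}$, the map $t \mapsto g'(t)$ is continuous on $[0,1]$. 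This justifies writing
$$
\mathcal E(x+z) - \mathcal E(x) = g(1) - g(0) = \int_0^1 \langle \mathcal E'(x+tz), z\rangle \, dt.
$$

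Next I would add and subtract the base-point derivative inside the integral, splitting
$$
\mathcal E(x+z) - \mathcal E(x) = \langle \mathcal E'(x), z\rangle + \int_0^1 \langle \mathcal E'(x+tz) - \mathcal E'(x), z\rangle \, dt,
$$
and then estimate the remainder. Using the duality bound $|\langle \varphi, z\rangle| \le \|\varphi\|_* \, \|z\|$ together with the Lipschitz condition $\|\mathcal E'(x+tz)-\mathcal E'(x)\|_* \le \mathcal L \, \|tz\| = \mathcal L \, t \, \|z\|$, the remainder is controlled by
$$
\int_0^1 \mathcal L \, t \, \|z\|^2 \, dt = \frac{\mathcal L}{2}\|z\|^2 ,
$$
which yields exactly the claimed inequality $\mathcal E(x+z) \le \mathcal E(x) + \langle \mathcal E'(x), z\rangle + \frac{\mathcal L}{2}\|z\|^2$.

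The one point that requires care — and the only place where the two radii $r$ and $s$ enter — is verifying that the whole segment over which I integrate stays inside the region $B_{\X,r+s}$ where the Lipschitz estimate is available. For $t \in [0,1]$ one has $\|x+tz\| \le \|x\| + t\|z\| \le r + s$, so both $x$ and $x+tz$ lie in $B_{\X,r+s}$ and the Lipschitz bound may be applied along the entire path. I do not expect a serious obstacle here: this is the classical descent lemma, and the only substantive verification is the validity of the fundamental theorem of calculus for $g$, which follows from the continuity of $g'$. (I read the displayed inequality as carrying $\mathcal E(x)$ on the right-hand side with $x \in B_{\X,r}$, since this is the form consistent with the subsequent use of the lemma in bounding $\mathcal E(u_m)-\mathcal E(u^*)$.)
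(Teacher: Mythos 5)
The paper gives no proof of this lemma --- it is quoted verbatim from Akilov--Kantorovich (Lemma~2, p.~464) --- and your argument is the standard proof of the descent lemma via the fundamental theorem of calculus for $g(t)=\mathcal E(x+tz)$ along the segment, which is correct and presumably the same as the cited source. You were also right on the two points where the statement as printed is imprecise: the term $\mathcal E(z)$ on the right-hand side is a typo for $\mathcal E(x)$, and the hypothesis $x\in B_{\X,r}$ is implicitly needed to keep the segment inside $B_{\X,r+s}$; both readings are confirmed by the way the lemma is applied in the proof of Theorem~\ref{th:rate_convergence}.
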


\begin{lem}[{\cite[Lemma 4 p. 467]{Kantorovich}}]\label{rate2}
Let $\{\lambda_m\}_{m \in \N}$ be a sequence of strictly positive real numbers such that,
for some $\mu > 0$ we have $\lambda_m -\lambda_{m+1} \ge \mu \lambda_m^2$ for all
$m \ge 1.$ Then $\lambda_m = O(m^{-1}).$
\end{lem}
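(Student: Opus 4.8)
The plan is to linearize the recursion by passing to reciprocals. Set $a_m := 1/\lambda_m$, which is well defined and strictly positive because each $\lambda_m > 0$. First I would record that the hypothesis already forces strict monotonicity: since $\mu \lambda_m^2 > 0$, the inequality $\lambda_m - \lambda_{m+1} \ge \mu \lambda_m^2$ gives $\lambda_{m+1} < \lambda_m$ for every $m \ge 1$, so $\{\lambda_m\}$ is strictly decreasing and in particular $\lambda_m / \lambda_{m+1} \ge 1$.

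The key step is to divide the defining inequality by the positive quantity $\lambda_m \lambda_{m+1}$. This turns the difference on the left into a telescoping increment for the reciprocals, namely
\begin{equation*}
a_{m+1} - a_m = \frac{1}{\lambda_{m+1}} - \frac{1}{\lambda_m} = \frac{\lambda_m - \lambda_{m+1}}{\lambda_m \lambda_{m+1}} \ge \frac{\mu \lambda_m^2}{\lambda_m \lambda_{m+1}} = \mu\,\frac{\lambda_m}{\lambda_{m+1}} \ge \mu,
\end{equation*}
where the final inequality uses the monotonicity observed above. Thus $a_{m+1} - a_m \ge \mu$ for all $m \ge 1$; that is, the reciprocal sequence grows at least linearly.

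From here I would simply telescope. Summing the increments from $1$ to $m-1$ yields $a_m \ge a_1 + (m-1)\mu$ for $m \ge 2$, and since $a_1 > 0$ this gives $a_m \ge (m-1)\mu$; inverting, $\lambda_m \le 1/((m-1)\mu)$ for $m \ge 2$. To package this as $O(m^{-1})$, set $C := \max\{\lambda_1, 2/\mu\}$. For $m = 1$ the bound $\lambda_1 \le C$ is immediate, and for $m \ge 2$ one has $(m-1) \ge m/2$, whence $\lambda_m \le 1/((m-1)\mu) \le 2/(\mu m) \le C/m$. Therefore $\lambda_m \le C/m$ for all $m \ge 1$, which is exactly the claim $\lambda_m = O(m^{-1})$.

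There is no genuine obstacle here; the argument is entirely elementary. The only point requiring a moment's care is the passage producing $a_{m+1} - a_m \ge \mu$ with a clean constant independent of $m$: this hinges on first extracting strict monotonicity from the hypothesis, so that the factor $\lambda_m / \lambda_{m+1}$ can be bounded below by $1$ and dropped. Everything after that is a telescoping sum together with the choice of a single constant $C$ absorbing the two boundary regimes $m = 1$ and $m \ge 2$.
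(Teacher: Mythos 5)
Your proof is correct: the paper does not supply its own argument for this lemma but cites it from Akilov--Kantorovich \cite{Kantorovich}, and your reciprocal-plus-telescoping argument (dividing by $\lambda_m\lambda_{m+1}$ to get $1/\lambda_{m+1}-1/\lambda_m \ge \mu$, then summing) is precisely the standard proof of that classical result. The bookkeeping with $C=\max\{\lambda_1,2/\mu\}$ and the bound $m-1\ge m/2$ for $m\ge 2$ is also sound, so there is nothing to fix.
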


The next proposition provides an inequality that will be useful to prove Theorem~\ref{th:rate_convergence}.

\begin{prop}\label{rate3}
Let $\X$ be a reflexive Banach space. Assume that $\mathcal E:\X \longrightarrow \mathbb{R}$
satisfies (A)-(B) and $u^*\in \X$ solves \eqref{main}. Let
$\Omega_0 = \mathcal{E}^{-1}([\mathcal E(u^*),\mathcal E(0)]),$ then 
there exists $c = c(\Omega_0) > 0$ such that 
$$
\mathcal E(u) - \mathcal E(u^*) \le c \|\mathcal E'(u)\|_*
$$
holds for all $u \in \Omega_0.$
\end{prop}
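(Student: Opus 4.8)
The plan is to reduce the estimate to the first-order convexity inequality recorded in Remark~\ref{remark_lemma_assumption} together with the boundedness of the sublevel set $\Omega_0$. Applying \eqref{eq:J_strong_convexity_consequence1} with $y = u$ and $x = u^*$ gives
$$
\mathcal E(u) - \mathcal E(u^*) \le \langle \mathcal E'(u), u - u^* \rangle ,
$$
and since the right-hand side is a duality pairing it is controlled by the dual norm, namely $\langle \mathcal E'(u), u - u^* \rangle \le \|\mathcal E'(u)\|_* \, \|u - u^*\|$. Thus the whole proposition reduces to showing that $\|u - u^*\|$ admits a uniform bound over $\Omega_0$ by a constant that depends only on $\Omega_0$.

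The key observation is that $\Omega_0$ is bounded. Indeed, $\Omega_0 \subset \{ x \in \X : \mathcal E(x) \le \mathcal E(0) \}$, and by Lemma~\ref{lemma_assumption}(c) the functional $\mathcal E$ is coercive, so every sublevel set of $\mathcal E$ is bounded; write $R := \sup_{u \in \Omega_0} \|u\| < \infty$. Moreover $u^* \in \Omega_0$, because $\mathcal E(u^*) \le \mathcal E(0)$ by minimality, so in particular $\|u^*\| \le R$. Hence I would set
$$
c := \sup_{u \in \Omega_0} \|u - u^*\| \le R + \|u^*\| \le 2R < \infty ,
$$
a finite constant depending only on $\Omega_0$, exactly as the statement $c = c(\Omega_0)$ requires.

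Combining the two steps yields $\mathcal E(u) - \mathcal E(u^*) \le c \, \|\mathcal E'(u)\|_*$ for every $u \in \Omega_0$, which is the claim. There is no serious obstacle here: the only structural input beyond the linear convexity bound is coercivity, which has already been established. In particular, the stronger $\X$-ellipticity inequality \eqref{eq:J_strong_convexity_consequence} is not needed for this estimate, since the plain lower bound \eqref{eq:J_strong_convexity_consequence1} suffices; the role of this proposition is then to convert the vanishing of $\langle \mathcal E'(u_m), \cdot \rangle$ along a progressive learning into control of the energy gap, feeding the $O(m^{-1})$ rate via Lemmas~\ref{rate1} and~\ref{rate2}.
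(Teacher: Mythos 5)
Your proof is correct and rests on the same two ingredients as the paper's: the first-order convexity inequality \eqref{eq:J_strong_convexity_consequence0} (equivalently \eqref{eq:J_strong_convexity_consequence1}) and the boundedness of $\Omega_0$ coming from coercivity. The only difference is cosmetic — you test the inequality directly at $z=u^*-u$ and bound the pairing by $\|\mathcal E'(u)\|_*\,\|u-u^*\|$, whereas the paper minimizes $\langle \mathcal E'(u),z\rangle$ over a ball $B_{\X,c}$ containing $\Omega_0-\Omega_0$ and invokes $\min_{z\in B_{\X,c}}\langle \mathcal E'(u),z\rangle=-c\,\|\mathcal E'(u)\|_*$; your version is a slight streamlining of the same argument.
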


\begin{proof}
See Appendix~\ref{appendix:a1}.
\end{proof}

From the above result and Proposition~\ref{lem:J_decrease}(a) we obtain the following.

\begin{cor}\label{rate4}
Let $\X$ be a reflexive Banach space and $\Sc$ be a universal dictionary of $\X.$
Assume that $\mathcal E:\X \longrightarrow \mathbb{R}$
satisfies (A)-(B) and $u^*\in \X$ solves \eqref{main}. Let
$\Omega_0 = \mathcal{E}^{-1}([\mathcal E(u^*),\mathcal E(0)]),$ then there exists $c = c(\Omega_0) > 0$ 
such that for each greedy sequence $\{u_m\}_{m \ge 1}$ of $u^*$ by a dictionary optimization over $\Sc,$ 
$$
\mathcal E(u_m) - \mathcal E(u^*) \le c \|\mathcal E'(u_m)\|_*
$$
holds for $m \ge 1.$
\end{cor}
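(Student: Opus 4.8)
The plan is to obtain this corollary as an immediate consequence of Proposition~\ref{rate3} once we verify that every iterate of the progressive learning stays inside the sublevel set $\Omega_0$. Proposition~\ref{rate3} already delivers the desired inequality $\mathcal E(u) - \mathcal E(u^*) \le c\|\mathcal E'(u)\|_*$ with a constant $c = c(\Omega_0)$ depending only on $\Omega_0$, uniformly over all $u \in \Omega_0$. Thus the entire task reduces to showing that $u_m \in \Omega_0$ for every $m \ge 1$, after which we simply specialize Proposition~\ref{rate3} to $u = u_m$.

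First I would record the two bounds that pin the iterates into $\Omega_0 = \mathcal E^{-1}([\mathcal E(u^*),\mathcal E(0)])$. For the upper bound, I use Proposition~\ref{lem:J_decrease}(a), which asserts that $\{\mathcal E(u_m)\}_{m \ge 1}$ is non-increasing; since $u_0 = 0$ by construction of the progressive learning, iterating the monotonicity gives
$$
\mathcal E(u_m) \le \mathcal E(u_{m-1}) \le \cdots \le \mathcal E(u_0) = \mathcal E(0).
$$
For the lower bound, Proposition~\ref{lem:J_decrease}(a) (equivalently, the fact that $u^*$ is the global minimizer from Theorem~\ref{th:solution}) gives $\mathcal E(u^*) \le \mathcal E(u_m)$. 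Combining the two, $\mathcal E(u^*) \le \mathcal E(u_m) \le \mathcal E(0)$, so $\mathcal E(u_m) \in [\mathcal E(u^*),\mathcal E(0)]$, which is exactly the statement that $u_m \in \Omega_0$ for all $m \ge 1$.

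Having placed each $u_m$ in $\Omega_0$, I would then invoke Proposition~\ref{rate3} with $u = u_m$ and the same constant $c = c(\Omega_0)$, concluding that
$$
\mathcal E(u_m) - \mathcal E(u^*) \le c\,\|\mathcal E'(u_m)\|_*
$$
holds for every $m \ge 1$, which is the claim. I do not anticipate any genuine obstacle here, as this is a direct corollary: the only point requiring verification is the membership $u_m \in \Omega_0$, and even that is immediate once the monotonicity from Proposition~\ref{lem:J_decrease}(a) and the normalization $u_0 = 0$ are combined. The constant $c$ is inherited verbatim from Proposition~\ref{rate3}, so no new estimate need be produced; this uniform-in-$m$ bound is precisely what will feed into the recursion of Lemma~\ref{rate2} in the proof of Theorem~\ref{th:rate_convergence}.
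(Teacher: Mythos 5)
Your proposal is correct and matches the paper's own (implicit) argument exactly: the paper derives Corollary~\ref{rate4} precisely by combining Proposition~\ref{rate3} with Proposition~\ref{lem:J_decrease}(a), which together with $u_0=0$ gives $\mathcal E(u^*)\le\mathcal E(u_m)\le\mathcal E(0)$ and hence $u_m\in\Omega_0$. Your write-up simply makes explicit the membership check that the paper leaves to the reader.
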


Now, we have all ingredients to prove Theorem~\ref{th:rate_convergence}

\subsubsection*{Proof of Theorem~\ref{th:rate_convergence}}
Put $\lambda_m = \mathcal E(u_{m})-\mathcal E(u^*)$ for $m \ge 1.$  We claim that the sequence
$(\lambda_m)_{m \ge 1}$ satisfies the hypothesis of Lemma~\ref{rate2}. Assume
that $u_{m+1} = u_{m} + \lambda_{m+1} w_{m+1}$ where $\|w_{m+1}\| = 1$ 
and $\lambda_{m+1}\in \mathbb{R}.$ 

Recall that $\Omega_0$ is closed and bounded (see Section~\ref{ex1.5}). Then the set
$$
\Omega_0 -\Omega_0 =\{z \in X: z=z_1-z_2 \text{ where } z_i \in \Omega_0 \text{ for } i=1,2\}
$$
is also bounded. Fix $s > 0$ be such that
$\Omega_0 -\Omega_0 \subset B_{\X,s}.$ Take $a=r+s$ for some $r >0.$  Let $\mathcal L$ be the Lipschitz
continuity constant of ${\mathcal E}'$ on the bounded set $B_{\mathbb X,a}.$
Then, for $m \ge 1,$
$$
\|\mathcal{E}'(u_m)\|_* = \|\mathcal{E}'(u_m)-\mathcal{E}'(u^*)\| \le  \mathcal L \|u_m -u^*\| \le \mathcal Ls,
$$
holds.

Now, from Lemma~\ref{rate1}, 
$$
\begin{array}{cl}
\mathcal E(u_{m+1}) & \le \mathcal E(u_{m}+\mu w_{m+1}) \\ 
&  \le \mathcal E(u_m) + \mu \langle \mathcal{E}'(u_m),w_{m+1} \rangle
+ \frac{\mathcal L}{2} \mu^2 \\
& \le \mathcal E(u_m) + \mu \|\mathcal{E}'(u_m)\|_* + \frac{\mathcal L}{2} \mu^2 
\end{array}
$$
holds for $\|\mu w_{m+1}\| = |\mu| \le s,$ in particular $|\mu_{m+1}| = \|u_{m+1}-u_{m}\| \le s.$
The minimum for
$$
\gamma(\mu) = \mathcal E(u_m) + \mu \|\mathcal{E}'(u_m)\|_* + \frac{\mathcal L}{2} \mu^2
$$
is obtained at $\widetilde{\mu} = -\frac{1}{\mathcal L} \|\mathcal E'(u_m)\|_*,$ where $|\widetilde{\mu}| \le s,$ and
$\gamma(\widetilde{\mu}) =  \mathcal E(u_m) - \frac{1}{2\mathcal L}\|\mathcal E'(u_m)\|_*^2.$ Hence,
$$
\mathcal E(u_{m+1}) \le  \mathcal E\left(u_{m}  -\frac{1}{\mathcal L} \|\mathcal E'(u_m)\|_* w_{m+1}\right) \le  \mathcal E(u_m) - \frac{1}{2\mathcal L}\|\mathcal E'(u_m)\|_*^2.
$$
Thus,
$$
 \lambda_m - \lambda_{m+1} = \mathcal E(u_{m})  - \mathcal E(u_{m+1}) \ge \frac{1}{2\mathcal L} \|\mathcal E'(u_m)\|_*^2.
$$
On the other hand, by Corollary~\ref{rate4}, we have for some $c > 0$
$$
\lambda_m = \mathcal E(u_m) - \mathcal E(u^*) \le c \|\mathcal E'(u_m)\|_*,
$$
for $m \ge 1.$ We conclude that 
$$
 \lambda_m - \lambda_{m+1} \ge \frac{1}{2c^2 \mathcal L} \lambda_m^2.
$$
Then the theorem follows from Lemma~\ref{rate2} taking $\mu =  \frac{1}{2c^2 \mathcal L}.$
\qed

\section{Conclusions and final remarks}\label{sec4}

In this paper we propose a general mathematical framework to solve non-linear convex
problems by a greedy sequence over a dictionary optimization. It appears as
a supervised learning approach  to the classical Method of Steepest Descend in
a reflexive Banach space. 
Moreover, we introduce the radial dictionaries as a 
class of sets generated by closed and bounded sets. This class includes, among others, 
tensors in Tucker format with bounded rank and 
Neural Networks with fixed architecture and bounded parameters.
We also point out that in our framework the convergence of the
Proper Generalized Decomposition (PGD) in a reflexive Banach space, 
that has been proved in \cite{FN}, is fulfilled. The novelty is that 
we are able to provide a convergence rate of $O(m^{-1})$ for the PGD.

\appendix

\section{Proof of Proposition~\ref{lem:J_decrease} and Proposition~\ref{rate3}}\label{appendix:a1}

To prove Proposition~\ref{lem:J_decrease} we need the following technical lemma.

\begin{lem}\label{NEW}
Let $\X$ be a reflexive Banach space and $\Sc$ be a universal dictionary of $\X.$
	Assume that $\mathcal E:\X \longrightarrow \mathbb{R}$
	satisfies (A)-(B) and $u^*\in \X$ solves \eqref{main}. Then for each greedy sequence $\{u_m\}_{m \ge 1}$ of $u^*$ by a $\mathcal E$-dictionary optimization over $\Sc$ the
	following statements hold.
\begin{enumerate}
\item[\textbf{(a)}] The sequence $\{\mathcal E'({u}_m)\}_{m\ge
		1},$ satisfies
	$\langle \mathcal E'(u_{m}), u_m-u_{m-1} \rangle = 0$ for all $m \ge 1.$
	\item[\textbf{(b)}] It holds
	$\sum_{m=1}^\infty \Vert u_m-u_{m-1}  \Vert^s < \infty,$
for some  $s > 1,$ and thus,
\begin{align}\lim_{m\to\infty
} \Vert u_m-u_{m-1} \Vert = 0. \label{eq:lim_zm}
\end{align}
\item[\textbf{(c)}] There exists $C >0$ such
that for $m\ge 1,$
$$ \vert \langle {\mathcal E}'(u_{m-1}),z\rangle \vert \leqslant C \Vert  u_m-u_{m-1}
\Vert^2 \Vert z\Vert,
$$
holds for all $z \in \Sc.$ 
\end{enumerate}
\end{lem}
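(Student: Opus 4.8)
The plan is to prove the three claims of Lemma~\ref{NEW} in order, as they build on each other and on the variational characterization of the dictionary optimization step.

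For part \textbf{(a)}, the key observation is that $u_m - u_{m-1} = \widetilde{\varepsilon}_m \in \boldsymbol{\nabla}_{u_{m-1}}(\mathcal E;\Sc)$, meaning $\widetilde{\varepsilon}_m$ minimizes $v \mapsto \mathcal E(u_{m-1}+v)$ over the cone $\Sc$. First I would note that since $\Sc$ is a cone, $\lambda\,\widetilde{\varepsilon}_m \in \Sc$ for all $\lambda \in \R$. Hence the scalar function $g(\lambda) := \mathcal E(u_{m-1} + \lambda\,\widetilde{\varepsilon}_m)$ attains its minimum over $\R$ at $\lambda = 1$ (it cannot do better at another $\lambda$ without contradicting optimality of $\widetilde{\varepsilon}_m$). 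Differentiating and setting $g'(1) = 0$ gives $\langle \mathcal E'(u_{m-1}+\widetilde{\varepsilon}_m), \widetilde{\varepsilon}_m\rangle = 0$, which is exactly $\langle \mathcal E'(u_m), u_m - u_{m-1}\rangle = 0$.

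For part \textbf{(b)}, I would use the ellipticity/strong-convexity consequence \eqref{eq:J_strong_convexity_consequence} from Lemma~\ref{lemma_assumption}(a) applied with $x = u_{m-1}$ and $y = u_m$, which yields
$$
\mathcal E(u_{m-1}) - \mathcal E(u_m) \ge \langle \mathcal E'(u_m), u_{m-1}-u_m\rangle + \frac{\alpha}{s}\|u_m - u_{m-1}\|^s = \frac{\alpha}{s}\|u_m - u_{m-1}\|^s,
$$
where the inner product vanishes by part \textbf{(a)}. Summing over $m$ telescopes the left side, and since $\{\mathcal E(u_m)\}$ is bounded below by $\mathcal E(u^*)$ (Proposition~\ref{lem:J_decrease}(a), or directly from coercivity), the series $\sum_m \|u_m - u_{m-1}\|^s$ converges; the limit \eqref{eq:lim_zm} follows as the terms of a convergent series tend to zero.

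For part \textbf{(c)}, the plan is to exploit the local Lipschitz continuity of $\mathcal E'$ together with part \textbf{(a)}. The sequence $\{u_m\}$ stays in the bounded sublevel set $\Omega_0 = \mathcal E^{-1}([\mathcal E(u^*),\mathcal E(0)])$, so all iterates lie in a fixed bounded ball $B_{\X,a}$ on which $\mathcal E'$ has some Lipschitz constant $\mathcal L$. For $z \in \Sc$, writing $\langle \mathcal E'(u_{m-1}), z\rangle = \langle \mathcal E'(u_{m-1}) - \mathcal E'(u_m), z\rangle + \langle \mathcal E'(u_m), z\rangle$, I would bound the first term by $\|\mathcal E'(u_{m-1}) - \mathcal E'(u_m)\|_* \|z\| \le \mathcal L\|u_m - u_{m-1}\|\,\|z\|$; the real work is controlling the second term. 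Here I expect the main obstacle: one must show $\langle \mathcal E'(u_m), z\rangle$ is itself controlled by $\|u_m - u_{m-1}\|^2\|z\|$ for all $z \in \Sc$. Because $\widetilde{\varepsilon}_m$ is a global minimizer over $\Sc$, for any $z \in \Sc$ and small $t>0$ the point $\widetilde{\varepsilon}_m + tz \in \Sc$ gives $\mathcal E(u_{m-1}+\widetilde{\varepsilon}_m + tz) \ge \mathcal E(u_m)$, and combining the resulting first-order optimality inequality $\langle \mathcal E'(u_m), z\rangle \ge 0$ with the cone property (applying it to both $z$ and $-z$ when $-z \in \Sc$, or more delicately using a second-order Taylor-type estimate from Lemma~\ref{rate1}) should produce the quadratic bound in $\|u_m - u_{m-1}\|$. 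I anticipate the careful step is converting the one-sided optimality into a two-sided quadratic estimate uniformly in $z \in \Sc$, which is where the local Lipschitz property of $\mathcal E'$ and the normalization $\|z\|$ must be used precisely to extract the factor $\|u_m - u_{m-1}\|^2$.
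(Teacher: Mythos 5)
Parts \textbf{(a)} and \textbf{(b)} of your proposal are correct and coincide with the paper's own arguments: (a) is the one-dimensional optimality of $\lambda\mapsto\mathcal E(u_{m-1}+\lambda\widetilde{\varepsilon}_m)$ at $\lambda=1$ along the ray $\R\,\widetilde{\varepsilon}_m\subset\Sc$, and (b) is the telescoping of the ellipticity inequality once the duality term vanishes by (a).

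Part \textbf{(c)}, however, contains a genuine gap, and you have correctly located where the difficulty sits without resolving it. There are two problems. First, your decomposition $\langle\mathcal E'(u_{m-1}),z\rangle=\langle\mathcal E'(u_{m-1})-\mathcal E'(u_m),z\rangle+\langle\mathcal E'(u_m),z\rangle$ bounds the first term only by $\mathcal L\Vert u_m-u_{m-1}\Vert\,\Vert z\Vert$, which is \emph{linear} in $\Vert u_m-u_{m-1}\Vert$; no control of the second term can upgrade this to the required quadratic bound, so this decomposition is a dead end. Second, your proposed handle on $\langle\mathcal E'(u_m),z\rangle$ uses that $\widetilde{\varepsilon}_m+tz\in\Sc$ for $z\in\Sc$ and small $t>0$; this is false in general, since a radial dictionary is only a cone (closed under scalar multiplication), not convex or additively closed --- think of rank-one tensors. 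Indeed, if the resulting first-order condition $\langle\mathcal E'(u_m),z\rangle\ge 0$ held for all $z\in\Sc$, the cone property and universality would force $\mathcal E'(u_m)=0$, i.e.\ $u_m=u^*$ at every step, which is absurd. The paper's proof avoids both issues: it writes $-\langle\mathcal E'(u_{m-1}),z\rangle=\langle\mathcal E'(u_{m-1}+z)-\mathcal E'(u_{m-1}),z\rangle-\langle\mathcal E'(u_{m-1}+z),z\rangle$ (perturbing by $z$ itself, not by $u_m-u_{m-1}$), bounds the first term by $\mathcal L\Vert z\Vert^2$, controls the second via the convexity inequality \eqref{eq:J_strong_convexity_consequence0} together with the global optimality $\mathcal E(u_m)\le\mathcal E(u_{m-1}+z)$, which gives $-\langle\mathcal E'(u_{m-1}+z),z\rangle\le-\langle\mathcal E'(u_{m-1}+z),u_m-u_{m-1}\rangle$, and --- this is the step you are missing --- finally tests the resulting estimate at the particular dictionary element $z=\Vert u_m-u_{m-1}\Vert\,w$ with $w\in\Sc\cap S_{\X}$ (legitimate precisely because $\Sc$ is a cone). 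This choice turns every term of the bound $\mathcal L\left(\Vert z\Vert^2+\Vert z\Vert\,\Vert u_m-u_{m-1}\Vert+\Vert u_m-u_{m-1}\Vert^2\right)$ into $\mathcal L\Vert u_m-u_{m-1}\Vert^2$, and the estimate for general $z\in\Sc$ then follows from the $1$-homogeneity of $z\mapsto\langle\mathcal E'(u_{m-1}),z\rangle$ on the cone.
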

\begin{proof}
\textbf{(a)} Take $ u_m-u_{m-1} = \widetilde{\varepsilon}_m \in \boldsymbol{\nabla}_{u_{m-1}}(\mathcal E;\Sc)$ for some $m\ge 1.$
Put $\widetilde{\varepsilon}_m = \lambda_m w_m$, with
$\lambda_m\in\Rbb^+$ and $\norm[w_m]=1$. Recall that $\widetilde{\varepsilon}_m 
\in \arg\min_{z\in \Sc} \mathcal E(u_{m-1}+z)$.
Since $\mathcal D$ is a cone, we obtain
$$\mathcal E(u_{m-1}+\lambda_m w_m)\le
\mathcal E(u_{m-1}+\lambda w_m)$$ for all $\lambda \in
\Rbb$. Taking $\lambda = \lambda_m \pm \gamma$,
with $\gamma\in \Rbb^+$, we obtain for all cases
\begin{align*}
0&\le \frac{1}{\gamma} \left(\mathcal E(u_{m-1}+\lambda_m
w_m\pm \gamma w_m)- \mathcal E(u_{m-1}+\lambda_m
w_m)\right).
\end{align*}
Taking the limit $\gamma \searrow 0$, we obtain $ 0 \le \pm \langle
\mathcal E'(u_{m-1}+\lambda_m w_m),w_m\rangle $ and
therefore
$$\langle \mathcal E'(u_{m-1}+\lambda_m w_m),w_m\rangle = 0,$$
which ends the proof.

\textbf{(b)} By the condition (B) of $\mathcal E$, we have
\begin{align}
{\mathcal E}(u_{m-1})-{\mathcal E}(u_{m}) &\ge \langle -
{\mathcal E}'(u_m),u_m-u_{m-1}\rangle +
\frac{\alpha}{s} \Vert u_m -u_{m-1}\Vert^s \nonumber
\end{align}
for some $s > 1$ and $\alpha > 0.$
Using (a), we obtain
\begin{align}
{\mathcal E}(u_{m-1})-{\mathcal E}(u_{m}) &\ge \frac{\alpha}{s} \Vert
 u_m-u_{m-1} \Vert^s \nonumber 
\end{align}
Now, summing on $m$, and using $\lim_{m\to\infty } {\mathcal E}(u_m) =
L <\infty $ (by (a)), we obtain
\begin{align*}
\frac{\alpha}{s} \sum_{m=1}^\infty  \Vert  u_m-u_{m-1} \Vert^s &\le
\sum_{m=1}^\infty ({\mathcal E}(u_{m-1})-{\mathcal E}(u_{m}))  = {\mathcal E}(0) -
L< +\infty.
\end{align*}
which implies $\lim_{m \rightarrow \infty}\Vert  u_m-u_{m-1}
\Vert^s =0.$ The continuity of the map $x \mapsto x^{1/s}$ at $x=0$
proves (\ref{eq:lim_zm}).

\textbf{(c)} By using that $\{\mathcal E(u_m)\}_{m \in \N}$  
is bounded sequence and the functional $\mathcal E$ 
is coercive we obtain that  $\{u_m\}_{m\ge 1}$ is bounded. 
Then there exists $r > 0$ be such that $\|u_m\| \le r$
holds for all $m \ge 1.$ From (b), 
$\Vert u_m-u_{m-1}\Vert \rightarrow 0$ as $m\to\infty,$ hence $\{u_m -u_{m-1}\}_{m\ge 1}$ is also a
bounded sequence and hence there exists $s > 0$ 
be such that $\|u_m-u_{m-1}\| \le s$
holds for all $m \ge 1.$
Take $ a = r+s > 0$ and let $\mathcal L$ be the Lipschitz
continuity constant of ${\mathcal E}'$ on the bounded set $B_{\mathbb X,a}.$ Then
\begin{align*}
-\langle {\mathcal E}'(u_{m-1}),z\rangle  &= \langle
{\mathcal E}'(u_{m-1}+z)- {\mathcal E}'(u_{m-1})
 ,z\rangle - \langle {\mathcal E}'(u_{m-1}+z),z\rangle
\\
& \le \|{\mathcal E}'(u_{m-1}+z)- {\mathcal E}'(u_{m-1})\|_*\|z\| - \langle
{\mathcal E}'(u_{m-1}+z),z\rangle \\ 
& \le \mathcal L \Vert z \Vert^2 - \langle
{\mathcal E}'(u_{m-1}+z),z\rangle
\end{align*}
for all $z\in  B_{\Sc,s} \subset B_{\mathbb X,a}.$ By \eqref{eq:J_strong_convexity_consequence0} and since ${\mathcal E}(u_{m})\le
{\mathcal E}(u_{m-1}+z)$ holds for all $z\in\Sc$, we
have
\begin{align*}
\langle {\mathcal E}'(u_{m-1}+z),(u_m -u_{m-1})-z
\rangle \le {\mathcal E}(u_{m}) -
{\mathcal E}(u_{m-1}+z) \le 0.
\end{align*}
Therefore, for all $z\in  B_{\Sc,s}$, we have
\begin{align*}
-\langle {\mathcal E}'(u_{m-1}),z\rangle  &\le
 \mathcal L
\Vert z \Vert^2 - \langle
{\mathcal E}'(u_{m-1}+z),u_m -u_{m-1}\rangle\\
&\le  \mathcal L \Vert z \Vert^2 - \langle
{\mathcal E}'(u_{m-1}+z) -
{\mathcal E}'(u_{m-1}),u_m -u_{m-1}\rangle
\quad \text{\footnotesize (by (a))} \\
&\le  \mathcal L \Vert z \Vert^2 + \mathcal L\Vert z-(u_m -u_{m-1})
\Vert \norm[u_m -u_{m-1}] \quad \text{\footnotesize (by (b))}
\\
&\le  \mathcal L \left(\norm[z] ^2 +  \norm[z]
\norm[u_m -u_{m-1}] + \norm[u_m -u_{m-1}]^2\right)
\end{align*}
Let $z=w \norm[u_m -u_{m-1}] \in B_{\Sc,s}$, with
$\norm[w] = 1$. Then
\begin{align*}
\vert \langle {\mathcal E}'(u_{m-1}),w\rangle \vert  & \le
3 \mathcal L \norm[u_m -u_{m-1}]^2 \quad \text{ for all } w\in \Sc \cap S_{\X}.
\end{align*}
Taking $w=z/\norm[z]$, with
$z\in \Sc$, and $C=3 \mathcal L > 0$ we obtain
 \begin{align*}
\vert \langle {\mathcal E}'(u_{m-1}),z\rangle \vert  &\le C
\norm[u_m -u_{m-1}]^2\norm[z] \quad \text{ for all }
z\in\Sc.
\end{align*}
This concludes the proof of (c) and the lemma. 
\end{proof}

\subsubsection*{Proof of Proposition~\ref{lem:J_decrease}} \textbf{(a)} Since $u_m = u_{m-1} + \boldsymbol{\nabla}_{u_{m-1}}(\mathcal E;\Sc)$ and $0\in\Sc,$ we have
	$$\mathcal E(u_m)=\min_{\xi \in\Sc}
	\mathcal E(u_{m-1}+\xi) \leq \mathcal E(u_{m-1}).$$
	Then, $\lbrace \mathcal E(u_m)\rbrace_{m\geq 1}$ is non increasing.

\textbf{(b)} By using the ellipticity property
	\eqref{eq:J_strong_convexity_consequence} of $\mathcal E$, we have
	\begin{align}
		\mathcal E(u_m)-\mathcal E(u^*)  \ge \langle \mathcal E'(u^*),u_m-u^*\rangle + \frac{\alpha}{s}\Vert
		u^*-u_{m}\Vert^s = \frac{\alpha}{s}\Vert u^*-u_{m}\Vert^s. \nonumber
	\end{align}
	Therefore,
	$$
	\frac{\alpha}{s} \Vert u^*-u_{m}\Vert^s
	\le \mathcal E(u_m)-\mathcal E(u^*) 
	\underset{m\to \infty}{\longrightarrow} 0,
	$$
	which ends the proof. 	
	
\textbf{(c)} Assume $u_m = u_{m-1}$ for some $m \ge 1.$ Then
	 $u_m -u_{m-1}= 0 \in \boldsymbol{\nabla}_{u_{m-1}}(\mathcal E;\Sc)$. Hence, from Theorem \ref{lem:carac_sol_S1}, we conclude that ${u}_{m-1}$
	solves \eqref{main}.
	
\textbf{(d)} Thanks to the reflexivity of the space $\X$, we can identify
$\X^{**}$ with $\X$ and the
duality pairing $\langle \cdot, \cdot
\rangle_{\X^{**},\X^*}$ with
$\langle \cdot, \cdot
\rangle_{\X^*,\X}$
(recall that weak and weak-$*$ topologies coincide on
$\X^*$).
The sequence $\{u_m\}_{m\in \mathbb{N}}$ being bounded, and since by condition (A)
$\mathcal E'$ is locally Lipschitz continuous, we have that there
exists a constant $C>0$ such that
$$
\Vert \mathcal E'(u_m)\Vert = \Vert \mathcal E'(u_m)
-\mathcal E'(u)\Vert \le C \Vert u-u_m\Vert.
$$
This inequality proves that $\{\mathcal E'(u_m)\}_{m\in\mathbb N}\subset
\X^*$ is a bounded sequence. Since
$\X^*$ is also reflexive, from any subsequence
of $\{\mathcal E'(u_m)\}_{m\in \mathbb{N}}$, we can extract a further
subsequence $\{\mathcal E'(u_{m_k})\}_{k\in \mathbb{N}}$ that weakly-$*$
converges to an element $ \varphi \in
\X^*$. By using Lemma~\ref{NEW}(c),
we have for all $ z\in \Sc$,
$$
\vert \langle \mathcal E'(u_{m_k}), z\rangle \vert  \leq
C \Vert  u_{m_{k+1}}-u_{m_k} \Vert^2 \Vert  z\Vert.
$$
Taking the limit with $k \rightarrow \infty$, and using Lemma~\ref{NEW}(b), then $ \varphi \in
\X^*$ satisfies $\langle \varphi, z\rangle = 0$ for all $z\in \Sc.$
Since $\Sc$ is a universal dictionary, we conclude that
$\langle \varphi, z \rangle=0$ holds for all $z \in \X.$ Since from any subsequence of the
initial sequence $\{\mathcal E'(u_m)\}_{m\in\mathbb{N}}$ we can extract a
further subsequence that weakly-$*$ converges to the same limit
$0$, then the whole sequence converges to $0.$

\textbf{(e)} Let $s^*>1$ be such that $1/s^*+1/s =
1$. From Lemma~\ref{NEW}(b), we have $$
\sum_{m=1}^{\infty} \Vert u_{m}-u_{m-1} \Vert^s <\infty.$$ Since 
$\lim_{m\rightarrow \infty}\Vert u_{m}-u_{m-1} \Vert^s = 0$ we can extract
a subsequence $\{\Vert u_{m_k}-u_{m_{k-1}} \Vert^s\}_{k \ge 1}$ monotonically
decreasing and such that $$\sum_{k=1}^{\infty} \Vert u_{m_k}-u_{m_{k-1}} \Vert^s <\infty.$$
From Theorem 1 p.16 in \cite{Knopp},  $\lim_{k \rightarrow \infty} m_k \Vert
u_{m_k}-u_{m_{k-1}}\Vert^{s}\rightarrow 0.$ For $1<s\le 2$, we have
$2\le s^*$. Since $\lim_{k\to\infty} \Vert u_{m_k}-u_{m_{k-1}} \Vert
=0$, then $$\Vert u_{m_k}-u_{m_{k-1}}\Vert^{2s^*} \le \Vert
u_{m_k}-u_{m_{k-1}}\Vert^{2s}$$ for $k$ large enough, and therefore
 $\lim_{k \rightarrow \infty} m_k \Vert u_{m_k}-u_{m_{k-1}}\Vert^{2s^*} = 0.$

Since $u_{m_k} = \sum_{\ell=1}^k u_{m_\ell}-u_{m_{\ell-1}},$ we have
\begin{align*}
\vert\langle \mathcal E'(u_{m_k}),u_{m_k} \rangle \vert & \le  \sum_{\ell=1}^k
\vert \langle \mathcal E'(u_{m_k}),u_{m_\ell}-u_{m_{\ell-1}} \rangle \vert\\
&\le  C \sum_{\ell=1}^k \Vert u_{m_{k+1}}-u_{m_k}\Vert^2 \Vert u_{m_\ell}-u_{m_{\ell-1}} \Vert \quad
\text{(By Lemma~\ref{NEW}(c))}.
\end{align*}
By Holder's inequality, we have
\begin{align*}
\vert\langle \mathcal E'(u_{m_k}),u_{m_k} \rangle \vert  &\le  C \left( m_k \Vert u_{m_{k+1}}-u_{m_k}\Vert^{2s^*} \right)^{1/s^*} \left(
\sum_{\ell=1}^k \Vert u_{m_\ell}-u_{m_{\ell-1}} \Vert^s\right)^{1/s} \\ 
&\le  C \left( m_{k+1} \Vert u_{m_{k+1}}-u_{m_k}\Vert^{2s^*} \right)^{1/s^*} \left(
\sum_{\ell=1}^k \Vert u_{m_\ell}-u_{m_{\ell-1}} \Vert^s\right)^{1/s}.
\end{align*}
By taking limits as $k \rightarrow \infty$ in the above inequality we obtain the desired result. 
This ends the proof of proposition. \qed

\subsubsection*{Proof of Proposition~\ref{rate3}}
Recall that $\Omega_0$ is closed and bounded (see Section~\ref{ex1.5}). Then the set
$$
\Omega_0 -\Omega_0 =\{z \in \mathbb X: z=z_1-z_2 \text{ where } z_i \in \Omega_0 \text{ for } i=1,2\}
$$
is also bounded. Fix $c > 0$ be such that
$\Omega_0 -\Omega_0 \subset B_{c,\X}.$ Take $u \in \Omega_0$ and, by \eqref{eq:J_strong_convexity_consequence0},
$$
\mathcal E(u+z) - \mathcal E(u) \ge \langle \mathcal E'(u), z\rangle
$$
holds for all $z\in \X.$ Hence,
\begin{equation}\label{lemk1}
	\min_{z \in B_{\X,c}} \mathcal E(u+z) - \mathcal E(u)  \ge \min_{z \in B_{\X,c}}  \langle \mathcal E'(u), z\rangle.
\end{equation}
As $u \in \Omega_0,$ we have $u-\Omega_0 \subset \Omega_0 -\Omega_0 \subset  B_{\X,c},$ so that
$\Omega_0 \subset u+B_{\X,c}.$ Therefore,
$$
\mathcal E(u^*) = \min_{z \in X}\mathcal E(z) \le \min_{z \in B_{\X,c}} \mathcal E(u+z) \le \min_{z \in \Omega_0} \mathcal E(z) = \mathcal E(u^*).
$$
Thus, from \eqref{lemk1},
$$
\mathcal E(u^*)-\mathcal E(u) \ge \min_{z \in B_{\X,c}}  \langle \mathcal E'(u), z\rangle
$$
holds. Now, by using
$$
\min_{z \in B_{\X,c}}  \langle \mathcal E'(u), z\rangle = c \min_{z \in B_{\X,1}}  \langle \mathcal E'(u), z\rangle 
= - c \max_{z \in B_{\X,1}}  \langle \mathcal E'(u), z\rangle = - c \|\mathcal E'(u)\|_*,
$$
the proposition follows. \qed

\bigskip

\noindent \textbf{Acknowledgements:} Antonio Falc\'o would thank to Marie Billaud-Friess,  R\'egis Lebrun, Bertrand Michel, Anthony Nouy and Philipp Petersen their helpful discussions 
along the days in the Research School on High-dimensional Approximation and Deep Learning held in Nantes from May 16 to 20, 2022. 

\bigskip

\noindent \textbf{Funding :} P. M. Bern\'a was partially supported by the Grant PID2019-105599GB-I00 (Agencia Estatal de Investigación, Spain). A. Falc\'o was partially supported by  the grant number INDI21/15 from Universidad CEU Cardenal Herrera and by ESI International Chair@ CEU-UCH.

\bigskip

\noindent \textbf{Conflicts of Interest:} The authors declare no conflict of interest.

%
%
%
\begin{bibsection}
\begin{biblist}
\bib{Kantorovich}{book}{
title =     {Functional analysis},
   author =    {Akilov, G. P.},
   author = {Kantorovich, L. V.},
   publisher = {Pergamon Press},
    year =      {1982},
   edition =   {2d ed},
}

\bib{Ammar}{article}{
author = {Ammar A.}, 
author = {Huerta A.}, 
author = {Chinesta F.}, 
author = {Cueto E.}, 
author = {Leygue A.}, 
year = {2014}, 
title = {Parametric solutions involving geometry: A step towards efficient shape optimization}, 
journal = {Computer Methods in Applied Mechanics and Engineering}, 
volume = {268},
pages = {178–193}, 
doi = {10.1016/j.cma.2013.09.003},
}


\bib{BSU}{article}{
author = {Bachmayr, M.},
author = {Schneider, R.}, 
author = {Uschmajew, A.}, 
title = {Tensor Networks and Hierarchical Tensors for the Solution of High-Dimensional Partial Differential Equations}, 
journal = {Found. Comput. Math.}, 
volume = {16}, 
pages = {1423–1472}, 
year = {2016}, 
doi = {10.1007/s10208-016-9317-9}
}

\bib{BK}{article}{
author = {Bachmayr, M.}, 
author = {Kazeev, V.},
title = {Stability of Low-Rank Tensor Representations and Structured Multilevel Preconditioning for Elliptic PDEs}, 
journal = {Found. Comput. Math.}, 
volume = {20}, 
pages = {1175–1236}, 
year = {2020}, 
doi = {10.1007/s10208-020-09446-z},
}

\bib{Boucinha}{article}{
author = {Boucinha L.},
author = {Gravouil A.}, 
author = {Ammar A.},
title = {Space–time proper generalized decompositions for the resolution of transient elastodynamic models},
journal = {Computer Methods in Applied Mechanics and Engineering},
volume = {255},
pages = {67-88},
year = {2013},
doi = {10.1016/j.cma.2012.11.003},
}
\bib{CANDELA1}{article}{
author = {Candela, V.}, 
author = {Falc\' o, A.},
author = {Romero, P. D.},
title = {A general framework for a class of non-linear approximations with applications to image restoration},
journal = {Journal of Computational and Applied Mathematics},
volume = {330},
pages = {982-994},
year = {2018},
}

\bib{CANDELA2}{article}{
author = {Candela, V.}, 
author = {Romero, P. D.},
title = {Blind Deconvolution Models Regularized by Fractional Powers of the Laplacian},
journal = {J. Math. Imaging. Vis.},
volume = {32},
pages = {181–191},
year = {2008},
}

\bib{CAN05}{article}{
author = {Canuto, C.},
author = {Urban, K.},
title = {Adaptive optimization of convex functionals in Banach spaces},
journal = {SIAM J. Numer. Anal.}, 
volume ={42},
pages ={2043-2075},
year = {2005},
}

\bib{CIARLET}{book}{
author = {Ciarlet, P. G.},
title = {Introduction to Numerical Linear Algebra and Optimization},
series ={Cambridge Texts in applied Mathematics},
publisher = {Cambridge University Press},
date = {1989},
}

\bib{Coelho}{article}{
  doi = {10.1007/s00466-022-02214-6},
  year = {2022},
  author = {Coelho Lima I.}, 
  author = {Robens-Radermacher A.},  
  author = {Titscher T.}, 
  author = {Kadoke D.},
  author = {Koutsourelakis P-S.}, 
  author ={ Unger J. F.},
  title = {Bayesian inference for random field parameters with a goal-oriented 
  quality control of the {PGD} forward model's accuracy},
  journal = {Computational Mechanics}
}


\bib{Dinca}{article}{
author = {Dinca, G.},
author = {Jebelean, P.},
author = {Mawhin, J.},
title = {Variational and topological methods for Dirichlet problems with p-Laplacian},
journal = {Portugaliae Mathematica. Nova S\'erie}, 
volume ={58},
pages ={339-378},
year = {2001},
}


\bib{EKE99}{book}{
	author={Ekeland, I.},
		author={Teman, R.},
	title={Convex Analysis and Variational Problems},
	series={Classics in Applied Mathematics. SIAM},
	date={1999},
}


\bib{FHN}{article}{
author = {Falcó, A.}, 
author = {Hackbusch, W.}, 
author = {Nouy, A.}, 
title = {On the Dirac–Frenkel Variational Principle on Tensor Banach Spaces}, 
journal = {Found. Comput. Math.},
volume = {19}, 
pages = {159–204}, 
year = {2019}, 
doi = {10.1007/s10208-018-9381-4},
}

\bib{FalcoHackbusch}{article}{
author={Falc\'o, A.},
author={Hackbusch, W.},
title={On Minimal Subspaces in Tensor Representations},
journal={Found. Comp. Math.},
volume={12},
date={2012},
number={6},
pages={765--803},
issn={0022-1236},
review={\MR{3029146}},
doi={10.1016/j.jfa.2013.02.003},
}

\bib{FN}{article}{
	author={Falc\'o, A.},
	author={Nouy, A.},
	title={Proper generalized decomposition for nonlinear convex problems in tensor Banach spaces},
	journal={Number. Math.},
	volume={121},
	date={2012},
	number={},
	pages={503--530},
	issn={0022-1236},
	review={\MR{3029146}},
	doi={10.1016/j.jfa.2013.02.003},
}


\bib{Gonzalez}{article}{
  doi = {10.1016/j.matcom.2012.04.001},
  year = {2012},
  volume = {82},
  number = {9},
  pages = {1677--1695},
  author = {Gonz{\'{a}}lez D.},
  author = {Masson F.},
  author = {Poulhaon F.}, 
  author = {Leygue A.}, 
  author = {Cueto E.},   
  author = {Chinesta F.},
  title = {Proper Generalized Decomposition based dynamic data driven inverse identification},
  journal = {Mathematics and Computers in Simulation},
}

\bib{Greub}{book}{
	author = {Greub, W. H.}, 
	title = {Linear Algebra, 4th ed.},
	series ={Graduate Text in Mathematics}, 
	publisher ={Springer-Verlag}, 
	date={1981},
}


\bib{Knopp}{book}{
	author = {Knop, K.}, 
	title = {Infinite Sequences and Series}, 
	publisher ={Dover Publications, Inc.}, 
	date={1956},
}

\bib{Leygue}{article}{
author = {Leygue A.}, 
author = {Verron E.}, 
year = {2010}, 
title = {A first step towards the use of proper general decomposition method for structural optimization},
journal = {Arch. Computat. Methods. Eng.},
volume = {17},
pages = {465–472}, 
doi = {10.1007/ s11831-010-9052-3},
}

\bib{Niroomandi}{article}{
author = {Niroomandi S.}, 
author = {González D.}, 
author = {Alfaro I.}, 
author = {Bordeu F.}, 
author = {Leygue A.}, 
author = {Cueto E.}, 
author = {Chinesta F.}, 
year = {2013}, 
title = {Real-time simulation of biological soft tissues: A PGD approach},
journal = {Int. J. Numer. Meth. Biomed. Engng.},
volume = {29},
pages = {586–600},
doi = {10.1002/cnm.2544},
}


\bib{Mohri}{book}{
author = {Mohri, M.},  
author ={Rostamizadeh, A.}, 
author = {Talwalkar, A.},
title = {Foundations of Machine Learning},
series={Adaptive Computation and Machine Learning},
	edition ={Second Edition},
	publisher={The MIT Press},
	date={2018},
}


\bib{Petersen2}{article}{
	author ={Marcati, C.}, 
	author ={Opschoor, J.A.A.},
	author = {Petersen, P.C.},
	author = {Schwab, Ch.}, 
	title = {Exponential ReLU Neural Network Approximation Rates for Point and Edge Singularities}, 
	journal ={Found. Comput. Math.}, 
	year = {2022}, 
	doi = {10.1007/s10208-022-09565-9},
}

\bib{Petersen1}{article}{
	author = {Petersen, P.}, 
	author = {Raslan, M.}, 
	author = {Voigtlaender, F.},
	title = {Topological Properties of the Set of Functions Generated by Neural Networks of Fixed Size}, 
	journal = {Found. Comput. Math.}, 
	volume ={21}, 
	pages = {375–444},
	year = {2021}, 
}


\bib{Raissi1}{article}{
author = {Raissi, M.},
author = {Karniadakis, G.E.},
title = {Hidden physics models: machine learning of nonlinear partial differential equations},
journal = {J. Comput. Phys.},
volume  = {357}, 
pages ={125–141}, 
year = {2018},
doi ={10.1016/j.jcp.2017.11.039},
}

\bib{Raissi2}{article}{
author = {Raissi, M.},
author = {Perdikaris, P.},
author = {Karniadakis, G.E.},
title = {Physics-informed neural networks: a deep learning framework for solving forward and inverse problems involving nonlinear partial differential equations},
journal = {J. Comput. Phys.},
volume  = {378}, 
pages ={686–707}, 
year = {2019},
doi ={10.1016/j.jcp.2018.10.045},
}


\bib{Sawicki}{article}{
	author = {Sawicki, A.},
	author = {Karnas, K.},
	title = {Universality of Single-Qudit Gates},
	journal = {Ann. Henri Poincaré},
	volume  = {18}, 
	pages ={3515–3552}, 
	year = {2017},
	doi ={0.1007/s00023-017-0604-z},
}

\bib{Sheng}{article}{
author = {Sheng, H.},
author = {Yang, C.},
title = {PA penalty-free neural network method for solving a class of second-order boundary-value problems on complex geometries},
journal = {J. Comput. Phys.},
volume  = {428}, 
pages ={110085}, 
year = {2021},
doi ={10.1016/j.jcp.2020.110085},
}


\bib{Weinan1}{article}{
  author = {Weinan E.}, 
  author = {Bing Y.},
  doi = {10.1007/s40304-018-0127-z},
  year = {2018},
  publisher = {Springer Science and Business Media {LLC}},
  volume = {6},
  number = {1},
  pages = {1-12},
  title = {The Deep Ritz Method: A Deep Learning-Based Numerical Algorithm for Solving Variational Problems},
  journal = {Communications in Mathematics and Statistics},
}


\bib{Zeidler}{book}{
		author={Zeidler, E.},
	title={Nonlinear Functional Analysis and its Applications III. Variational Methods and Optimization.},
	series={CMS Books in Mathematics/Ouvrages de Math\'{e}matiques de la SMC},
	volume={26},
	publisher={Springer-Verlag},
	date={1985},
	}
\end{biblist}
\end{bibsection}

\end{document}